\newtheorem{thm}{Theorem}[section]
\theoremstyle{plain}
\newtheorem{lem}[thm]{Lemma}
\newtheorem{cor}[thm]{Corollary}
\newtheorem{prop}[thm]{Proposition}
\newtheorem{quest}[thm]{Question}
\theoremstyle{definition}
\newtheorem{definition}[thm]{Definition}
\theoremstyle{remark}
\newtheorem{eg}[thm]{Example}
\newtheorem{rem}[thm]{Remark}
\newtheorem{Claim}[thm]{Claim}
\newtheorem{prob}[thm]{Problem}
\newenvironment{proofof}[1]{\par
  \pushQED{\qed}%
  \normalfont \topsep6\p@\@plus6\p@\relax
  \trivlist
  \item[\hskip\labelsep
        \bfseries
    Proof of #1\@addpunct{.}]\ignorespaces
}{%
  \popQED\endtrivlist\@endpefalse
}
\DeclareSymbolFont{AMSb}{U}{msb}{m}{n}
\DeclareMathSymbol{\Z}{\mathbin}{AMSb}{"5A}
\DeclareMathSymbol{\R}{\mathbin}{AMSb}{"52}
\DeclareMathSymbol{\N}{\mathbin}{AMSb}{"4E}
\DeclareMathSymbol{\Q}{\mathbin}{AMSb}{"51}
\DeclareMathOperator{\conv}{conv}
\def\NP{{\textup{\textsf{NP}}}}
\def\poly{{\textup{\textsf{P}}}}
\def\AP{\text{AP}}
\def\Div{\text{Div}}
\def\Equalp{\text{Equal-p}}
\def\CongM{\text{Cong-M}}
\def\ex{\exists}
\def\for{\forall}
\newcommand{\floor}[1]{\lfloor#1\rfloor}
\newcommand{\abs}[1]{\lvert#1\rvert}
\def\bft{\mathbf{t}}
\def\w{\mathbf{w}}
\def\x{\mathbf{x}}
\def\y{\mathbf{y}}
\def\u{\mathbf{u}}
\def\t{\mathbf{t}}
\def\v{\mathbf{v}}
\renewcommand{\mod}[1]{
\;\, (\textup{mod} \; #1)
}
\def\mmod{\; \textup{mod} \;}
\newcommand{\cj}[1]{\overline{#1}}
\def\.{\hskip.06cm}
\def\ts{\hskip.03cm}
\def\del{\delta}
\def\L{\mathcal{L}}
\def\C{\mathcal{C}}
\def\wt{\widetilde}
\def\const{\textup{const}}
\def\ov{\overline}
\newcommand{\norm}[1]{\| #1 \|}
\begin{document}
\title{Parametric Presburger arithmetic: Complexity of counting and quantifier elimination}
\author{Tristram Bogart, John Goodrick, Danny Nguyen, and Kevin Woods}

\begin{abstract}
We consider an expansion of Presburger arithmetic which allows multiplication by $k$ parameters $t_1,\ldots,t_k$. A formula in this language defines a parametric set $S_\t \subseteq \Z^{d}$ as $\t$ varies in $\Z^k$, and we examine the counting function $\abs{S_\t}$ as a function of $\t$. For a single parameter, it is known that $\abs{S_t}$ can be expressed as an eventual quasi-polynomial (there is a period $m$ such that, for sufficiently large $t$, the function is polynomial on each of the residue classes mod $m$). We show that such a nice expression is impossible with 2 or more parameters. Indeed (assuming $\poly \neq \NP$) we construct a parametric set $S_{t_1,t_2}$ such that  $\abs{S_{t_1, t_2}}$ is not even polynomial-time computable on input $(t_1,t_2)$. In contrast, for parametric sets $S_\t \subseteq \Z^d$ with arbitrarily many parameters, defined in a similar language without the ordering relation, we show that $|S_\t|$ is always polynomial-time computable in the size of $\t$, and in fact can be represented using the gcd and similar functions.
\end{abstract}

\maketitle{}

\section{Introduction}

We study the difficulty of counting points in parametric sets of the form
\begin{equation}\label{eq:param_Presburger}
S_{\t} = \{ \x \in \Z^d \;\; : \; \;Q_{1}y_{1} \; Q_{2}y_{2} \; \dots Q_{m}y_{m} \;\; \Theta_{\t}(\x, \y) \}.
\end{equation}
Here $\t = (t_{1},\dots,t_{k})$ are the \emph{parameters}, $\x = (x_{1},\dots,x_{d})$ are the \emph{free} variables, and $\y = (y_{1},\dots,y_{m})$ are the \emph{quantified} variables, all ranging over $\Z$; $Q_i \in \{\forall, \exists\}$ are the quantifiers; and $\Theta_{\t}(\x, \y)$ is a Boolean combination,  in disjunctive normal form,  of linear inequalities in $\x,\y$ with coefficients in $\Z[\t]$. That is,
\begin{equation}\label{eq:DNF}
 \Theta_{\t}(\x, \y) = \left[A_1(\t) \cdot (\x, \y)^T \; \leq \; \cj b_1(\t) \right] \vee \ldots \vee \left[A_\ell(\t) \cdot (\x, \y)^T \leq \cj b_\ell(\t)\right],
\end{equation}
where each $A_i(\t)$ is a $r_{i} \times (d+m)$ matrix, each $\cj b_{i}(\t)$ is a length $r_{i}$ column vector,  all with entries in $\Z[\t]$,  and the concatenation $(\x,\y)$ of the $\x$ and $\y$ variables is treated as a row vector.\footnote{By a simple trick, we do not need to worry about negations $\neg (\lambda_1 x_1 + \ldots + \lambda_{d+m} y_m \leq c)$ of basic inequalities, since these are equivalent to strict inequalities ``$\ldots > c$,'' which in turn are equivalent to non-strict inequalities ``$\ldots \geq c+1$'' since we are working over the integers.}
If there are $k$ parameters $t_1,\ldots,t_k$, we say that the family of sets $\{ S_{\t} : \t \in \Z^k\}$ is a \emph{$k$-parametric Presburger family}. A general expression of the type \begin{equation}\label{eq:kparamPA}
\Phi_\t (\x) = Q_{1}y_{1} \; Q_{2}y_{2} \; \dots Q_{m}y_{m} \;\; \Theta_{\t}(\x, \y)
\end{equation}
with $\Theta_\t(\x, \y)$ as in (\ref{eq:param_Presburger}) is called a \emph{formula in $k$-parametric Presburger Arithmetic} (often abbreviated as $k$-parametric PA). Classic Presburger arithmetic corresponds to $k=0$. 

\begin{quest} \label{quest:big}
Given a $k$-parametric Presburger family defined by $S_\t = \{ \x \in \Z^d : \Phi_t(\x) \}$, under what conditions on the formula $\Phi_{\t}$ is the counting function $\abs{S_{\t}}$ a ``nice'' function of $\t$?\end{quest}

Of course, ``nice'' is a vague qualifier, so let's start with some nice examples. We will assume that the parameters $t_i$ are nonnegative in the following examples, which simplifies the number of cases:

\begin{eg}\label{ex:a}
If we define $S_{t_1,t_2} = \{x \in \Z: \; x\ge0 \ \wedge \ t_1 x \leq t_2\},$ then
\[ |S_{t_1,t_2}| = \floor{t_2/t_1}+1. \]
\end{eg}

\begin{eg}\label{ex:b}
The set $S_{t_1,t_2}=\big\{(x_1,x_2)\in\Z^2:\ x_1,x_2\ge 0\; \wedge \; t_1 x_1 +t_2 x_2=t_1t_2\big\}$
  consists of the integer points on a line segment with endpoints $(t_2,0)$ and $(0,t_1)$, and so
  \[\abs{S_{t_1,t_2}}=\gcd(t_1,t_2)+1.\]
\end{eg}

\begin{eg}\label{ex:c}
If $S_{t_1,t_2} = \{(x_1,x_2) \in \Z^2: \; x_1,x_2\ge0\ \wedge\ x_1+x_2=t_1\ \wedge\ 2x_1+x_2\le t_2\}$, then the equality forces $x_2=t_1-x_1$ (which is only valid if $x_1\le t_1$) and substituting into the inequality shows that
\begin{align*} |S_{t_1,t_2}| &= \abs{\{x_1\in\Z:\ 0\le x_1\le \min(t_1,t_2-t_1)\}}\\
&= \begin{cases}
t_1+1&\text{if } 2t_1\le t_2,\\
t_2-t_1+1 & \text{if } t_1\le t_2 <2t_1,\\
0&\text{if $t_2<t_1$.}
\end{cases}
\end{align*}
\end{eg}

\begin{eg}\label{ex:d}
If $S_{t} = \{x \in \Z: \; \exists y\in \Z,\ x,y\ge 0 \ \wedge \ 2x+2y+2=t\},$ then
\[ |S_{t}| =\begin{cases}
t/2 &\text{if $t$ even, $t\ge 2$},\\
0&\text{if $t$ odd.}
\end{cases} \]\end{eg}

We're seeing many types of ``nice'' functions in these examples, and the question is now how to generalize. In fact, Example \ref{ex:d} generalizes to any family in 1-parametic Presburger arithmetic \cite{BGW2017}, as described in the next section.

\subsection{$1$-parametric Presburger arithmetic}
In the case of a single parameter $t$, our perspective means studying families $\{S_t \; : \: t \in \Z\}$ of subsets of $\Z^d$ of the form
\begin{equation}\label{eq:1_param_Presburger}
S_t = \{ \x \in \Z^d \;\; : \; \;Q_{1}y_{1} \; Q_{2}y_{2} \; \dots Q_{m}y_{m} \;\; \Theta_t(\x, \y) \},
\end{equation}
where $\Theta_t(\x, \y)$ is exactly as in (\ref{eq:DNF}) except that the entries of the $A_i$'s and the $\cj b_i$'s come from the univariate polynomial ring $\Z[t]$. The study of such \emph{1-parametric PA families} was proposed by Woods in \cite{Woods2014}.
These families were further analyzed in \cite{BGW2017}, in which the main result is that they exhibit \emph{quasi-polynomial} behavior:

\begin{definition}\label{def:EQP}
  A function $g:\Z \rightarrow\Z$ is a \emph{quasi-polynomial} if there exists a period $m$ and polynomials $f_0,\ldots,f_{m-1}\in\Q[t]$ such that
\[g(t)=f_i(t),\text{ for }t\equiv i\bmod m.\]
A function $g:\Z\rightarrow\Z$ is an \emph{eventual quasi-polynomial}, abbreviated \emph{EQP}, if it agrees with a quasi-polynomial for sufficiently large $|t|$.
\end{definition}

Example \ref{ex:d} is a family where $\abs{S_t}$ is an EQP.

\begin{thm}\label{th:1-param} \cite{BGW2017}
  Let $\{S_t : t \in \Z\}$ be a 1-parametric PA family. There exists an EQP $g:\Z\rightarrow\N$ such that, if $S_t$ has finite cardinality, then $g(t)=\abs{S_t}$. The set of $t$ such that $S_t$ has finite cardinality is eventually periodic.
  
\end{thm}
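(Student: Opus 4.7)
My plan is to reduce counting $|S_t|$ to counting integer points in a finite union of parametric polyhedra whose vertex coordinates are rational functions of $t$, and then invoke a parametric Ehrhart-type theorem to extract the eventual quasi-polynomial.

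First, I would establish a parametric analog of Presburger quantifier elimination. The classical QE procedure eliminates $\exists y$ from a conjunction of atoms $ay \le L(\x)$ and $n \mid (ay - L(\x))$ by splitting into finitely many arithmetic-progression cases and shifting one of the bounds. In our setting, the coefficients of $y$ are polynomials $a(t) \in \Z[t]$, so I would do a preliminary case split on the sign of each such $a(t)$; since the sign of any fixed polynomial stabilizes for $|t|$ large, this splits $\Z$ into finitely many eventual cases. Similarly, $a(t) \bmod n$ depends only on $t \bmod n$, so by passing to a common modulus $M$ and restricting to each residue class of $t \bmod M$ we can resolve all $t$-only congruences to constants. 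Iterating over the quantifier block should yield, for each residue class $t \equiv r \pmod{M}$ and each sign pattern of the critical polynomials, an equivalent quantifier-free formula in $\x$ whose atoms are linear inequalities with coefficients in $\Z[t]$ together with congruences on $\x$ of fixed integer modulus. After a substitution $x_i \mapsto M x_i' + s_i$, the congruences on $\x$ can be absorbed, and $S_t$ appears as a disjoint union of integer points of finitely many parametric polyhedra $P_j(t) = \{\x : A_j(t)\x \le b_j(t)\}$ with $A_j, b_j \in \Z[t]$, valid on each residue class for $|t|$ large enough.

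Second, for each parametric polyhedron $P_j(t)$ I would argue that, for $|t|$ sufficiently large within a fixed residue class, its combinatorial type stabilizes: the face lattice and the support of active inequalities at each vertex are independent of $t$, so each vertex coordinate is a rational function of $t$ whose denominator divides a fixed polynomial. A parametric version of Ehrhart's theorem then gives that $|P_j(t) \cap \Z^d|$ is an EQP in $t$, provided $P_j(t)$ is bounded. Boundedness of $P_j(t)$ is itself equivalent to sign conditions on leading coefficients of a finite collection of polynomials in $\Z[t]$, so the set of $t$ for which every relevant $P_j(t)$ is bounded is eventually periodic. Combining across residue classes and inclusion-excluding over the pieces yields a single EQP $g$ with $g(t) = |S_t|$ whenever $|S_t| < \infty$, and simultaneously shows that the set of $t$ with $|S_t| < \infty$ is eventually periodic.

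The main obstacle is the parametric QE step, specifically the treatment of divisibility atoms $a(t) \mid L(t, \x)$ generated when eliminating an existential whose variable has a polynomial coefficient in $t$. The classical algorithm needs these moduli to combine via $\operatorname{lcm}(a_1(t), \ldots, a_s(t))$, which as $t$ varies is not a polynomial in $t$. I would deal with this by pushing each such atom into a case split on $t \bmod \bigl( a_1(t) \cdots a_s(t) \bigr)$ — controlled by fixing $t$ modulo a sufficiently large integer $M$ together with leading-coefficient sign conditions — and then recording only residue information about $\x$ with respect to an integer that is genuinely independent of $t$. Making this rigorous is where the bookkeeping lives, and it is the step that genuinely requires $t \in \Z$ and that forces quasi-polynomials (rather than polynomials) to appear in the conclusion.
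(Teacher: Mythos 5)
Note first that the paper itself does not prove Theorem~\ref{th:1-param}; it is cited from \cite{BGW2017}, so there is no in-paper proof to compare against. Your high-level plan (quantifier elimination on residue classes of $t$, followed by parametric Ehrhart counting) is in the right spirit, and you correctly identify the obstacle, but the proposed way around it does not work, leaving a genuine gap in the quantifier-elimination step. You claim that after QE the residual congruence conditions on $\x$ have \emph{fixed} integer modulus, so that they can be absorbed by a substitution $x_i \mapsto M x_i' + s_i$ with $M$ independent of $t$. This is not achievable: eliminating $\exists y$ when $y$'s coefficient is a non-constant polynomial $a(t) \in \Z[t]$ leaves a divisibility atom $a(t) \mid L(\x,t)$ whose modulus is unbounded in $t$. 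For instance, from $\exists y\, (t y = x)$ one obtains $t \mid x$, and the sublattice $\{x : t \mid x\}$ has index $|t|$ in $\Z$, which is unbounded. Fixing $t$ in a residue class modulo a constant $M$ controls $a(t) \bmod M$ but gives no purchase on divisibility \emph{by} $a(t)$; your proposed repair (``case split on $t \bmod (a_1(t)\cdots a_s(t))$ \ldots controlled by fixing $t$ modulo a sufficiently large integer $M$'') is circular, because the quantity you would need to tame is a polynomial in $t$, not a constant.

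The argument in \cite{BGW2017} resolves this by enlarging the coefficient class rather than trying to remain inside $\Z[t]$ with bounded congruences. The crucial fact is that the integer quotient $\lfloor a(t)/b(t) \rfloor$, the remainder $a(t) \bmod b(t)$, and $\gcd(a(t),b(t))$ for $a,b \in \Z[t]$ are themselves eventual quasi-polynomials in $t$; consequently the class of formulas with EQP-valued coefficients and bounds is closed under the substitutions required by elimination (equivalently, existential quantifiers admit EQP Skolem functions). Thus the quasi-polynomial in the conclusion enters \emph{during} elimination, in the coefficients, rather than only at a terminal Ehrhart step. Once that machinery is in place, your boundedness and finiteness analysis, and the final parametric counting, go through essentially as you sketch.
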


\begin{rem}
  In~\cite{BGW2017}, the parameter $t$ takes values in $\N$ instead of $\Z$.
  However, one can see that the same proofs and conclusions also hold when $t$ ranges over $\Z$.
\end{rem}

There are several other forms of quasi-polynomial behavior that 1-parametric PA families exhibit (such as possessing EQP Skolem functions; see \cite{BGW2017}). Here we focus on the cardinality, $\abs{S_t}$. We hope the reader agrees that EQPs are relatively ``nice'' functions.

\subsection{$k$-parametric Presburger arithmetic}
Let us restate our main definition:

\begin{definition}\label{def:k-PPA}
  A \emph{$k$-parametric PA family} is a collection
  $\{S_\bft : \bft = (t_1, \dots, t_k) \in \Z^k \}$
  of subsets of $\Z^d$ of the form
\begin{equation}\label{eq:multi_Presburger}
S_\bft = \{ \x \in \Z^d \;\; : \; \;Q_{1}y_{1} \; Q_{2}y_{2} \; \dots Q_{m}y_{m} \;\; \Theta_\bft(\x, \y) \},
\end{equation}
where now
$\Theta_\bft(\x, \y)$ is a Boolean combination of linear inequalities with coefficients in $\Z[\bft]$.

A \emph{$k$-parametric PA formula} $\Phi_{\t}$ is an expression ``$Q_{1}y_{1} \; Q_{2}y_{2} \; \dots Q_{m}y_{m} \; \Theta_\bft(\x, \y)$'' as above, or any logically equivalent first-order formula in the language $\mathcal{L} = \{+, 0, 1, \leq, \lambda_p(\t) : p \in \Z[\t]\}$ with a function symbols for $+$, unary function symbols $\lambda_p(\t)$ for multiplication by each polynomial $p(\t) \in \Z[\t]$, constant symbols for $0$ and $1$, and a relation symbol for $\leq$.

\end{definition}

\begin{rem}
Abusing the notation, we also denote the parametric family $\{S_{\t} : \t \in \Z^{k}\}$ just by $S_{\t}$ when the dimension $k$ is clear.
\end{rem}

Examples \ref{ex:a}, \ref{ex:b}, and \ref{ex:c} show that $k$-parametric PA families, with $k\ge 2$, \emph{can} have nice counting functions, $\abs{S_{\t}}$. Will they always? We despair of defining ``nice'' precisely, but we can at least provide a necessary condition: for a fixed family $S_{\t}$, if $\abs{S_{\t}}$ is to qualify as a nice function, there must at least be a polynomial-time algorithm that takes as input $\t\in\Z^k$ and outputs $\abs{S_{\t}}$.

\begin{quest} \label{quest:main} Given a $k$-parametric Presburger family defined by $S_\t = \{ \x \in \Z^d : \Phi_\t(\x) \}$, under what conditions on the (fixed) formula $\Phi_{\t}$ is the counting function $|S_{\t}|$ polynomial-time computable, taking as input the values of the parameters $\t$?
\end{quest}

Note that we define polynomial-time computation in the usual computer-science sense: the number of steps of the algorithm must be polynomial in the \emph{input size} of $\t$ (that is, the number of bits to encode $\t$ into binary), which is $k+\sum_i \log_2\abs{t_i}$. For example, the Euclidean algorithm is polynomial-time: it computes $\gcd(t_1,t_2)$ in number of arithmetic operations bounded by a degree 1 polynomial in $2+\log_2 t_1 + \log_2 t_2$.

The functions $|S_{\t}|$ from Examples \ref{ex:a} through \ref{ex:d} are all polynomial-time computable. From Theorem \ref{th:1-param} and the observation that EQPs are polynomial-time computable, we immediately obtain an answer to Question \ref{quest:main} in the case of a single parameter $t$:

\begin{cor}\label{cor:1_param}
  Let $S_t$ be any fixed 1-parametric PA family.
  Then there are polynomial time algorithms to: \textup{i)} check if $|S_t| = \infty$, \; \textup{ii)} compute $|S_{t}|$ if $|S_{t}| < \infty$.
\end{cor}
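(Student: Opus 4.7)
The plan is simply to unpack what Theorem~\ref{th:1-param} gives us and observe that every ingredient is a constant once the family $S_t$ is fixed, so that the only nontrivial cost is integer arithmetic with the input $t$ itself.

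First, apply Theorem~\ref{th:1-param} to the fixed family $S_t$ to extract: a period $m \in \N$, polynomials $f_0, \dots, f_{m-1} \in \Q[t]$, a threshold $T \in \N$, and a subset $R \subseteq \{0,1,\dots,m'-1\}$ (for some period $m'$) of residues mod $m'$ picking out those large $|t|$ for which $|S_t|$ is finite. All of $m, m', T, R$ and the polynomials $f_i$ depend only on the formula $\Phi_t$, not on the input, so they are hard-coded constants of the algorithm.

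For the finiteness check (part i), given input $t$: if $|t| < T$, consult a precomputed finite table (constantly many cases, each can be decided offline once and for all). Otherwise compute $t \bmod m'$, which is polynomial in $\log |t|$, and check membership in $R$. Similarly for part (ii), once we know $|S_t| < \infty$: if $|t| < T$ read off the precomputed value, and otherwise compute $i := t \bmod m$ and evaluate $f_i(t)$. Since $f_i$ has fixed degree $D$ and fixed rational coefficients, evaluating $f_i(t)$ by Horner's rule uses $O(D)$ arithmetic operations on rationals whose bit-length is $O(D \log |t|)$, so the total bit complexity is polynomial in $\log |t|$, hence polynomial in the input size of $t$.

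There is really no obstacle here, since the nontriviality has been absorbed into Theorem~\ref{th:1-param}; the only point to emphasize (and the only one a careful reader might quibble with) is the distinction between the \emph{input size} of $t$, which is $\Theta(\log |t|)$, and the \emph{value} of $t$: we must be careful that the constantly many exceptional values $|t| < T$ are handled by table lookup rather than by running some expensive routine, and that we never attempt to loop up to $t$ or to write down $t$ in unary. Once this is noted, the statement follows directly from the observation that EQPs, evaluated at integer inputs given in binary, are polynomial-time computable.
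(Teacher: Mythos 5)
Your proof is correct and matches the paper's approach exactly: the paper states that the corollary follows "immediately" from Theorem~\ref{th:1-param} together with the observation that EQPs are polynomial-time computable, and your writeup simply unpacks what "immediately" entails (table lookup on the finitely many small $|t|$, then a mod-reduction and Horner evaluation). The care you flag about input size being $\Theta(\log|t|)$ rather than $|t|$ is the only point worth dwelling on, and you handle it correctly.
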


The main goal of this paper is to construct a fixed 2-parametric PA family $\{S_{t_{1},t_{2}} : (t_{1},t_{2}) \in \Z^2\}$ for which there is no polynomial-time algorithm computing $\abs{S_{\t}}$ (assuming $\poly \neq \NP$). Therefore, while we cannot say with precision what a nice function should be like, we can say that this particular counting function $\abs{S_{\t}}$ is not nice. Furthermore, this implies that certain classes of functions (polynomials, gcds, floor functions, modular reductions,\ldots) are not expressive enough to capture $|S_{\t}|$, even for a very simple-looking $S_{\t}$. This contrasts with the 1-parameter case, where $\abs{S_{t}}$ is always an EQP and hence polynomial-time computable.

Definition \ref{def:k-PPA} is a generalization of \emph{classical Presburger arithmetic} (PA), in which a formula $\Phi$ is given only with explicit integer coefficients and constants ($A_{i}$ and $\cj b_{i}$) without any parameters $\t$.
PA is \emph{decidable}, meaning there is an algorithm to decide the truth of any given well-formed sentence in it.
Moreover, PA has full \emph{quantifier elimination} in an expanded language with predicates for divisibility by each fixed integer.
This important logical fact permits an algorithm to actually count the cardinality of any set definable by a PA formula $\Phi$ with an arbitrary number of quantifiers and inequalities, although with an unpractical triply exponential complexity in the length of $\Phi$ (see~\cite{Oppen}).
The complexity of PA is itself a fundamental topic in the study of decidable logical theories and their complexities (see~\cite{FR,Gradel}).

Returning to $k$-parametric PA, for a fixed formula $\Phi_{\t}$, given any value $\textbf{a} \in \Z^{k}$ for $\t$, we can substitute it into $\Phi_{\t}$ to get a formula $\Phi_\textbf{a}$ in PA.
By the above paragraph, the parametric counting problem for~\eqref{eq:param_Presburger} is always computable.
Moreover, the form of the resulting formula $\Phi_{\textbf{a}}$, especially its number of quantifiers and inequalities, stays the same for different values $\textbf{a}$ of $\t$.
So we can hope that the complexity of computing $|S_{\t}|$ (for a fixed family $S_\t$) is much lower than that of counting solutions to a general PA formula (when the formula is not fixed, but instead given as input to the algorithm). To reiterate, it is critical in our analysis that the formula $\Phi_{\t}$ be fixed throughout, and we look for an efficient algorithm with $\t$ as the only input.

\subsection{Summary of results}

Our main result is that if $\poly \neq \NP$ (technically, we only need the weaker assumption that $\#\mathsf{P} \neq \mathsf{FP}$), then there exists a $2$-parametric PA family $S_{\t}$ such that $\abs{S_\t}$ is not polynomial-time computable; in fact, such a family exists with  limited alternation of quantifiers. First we recall the $\Sigma_n$ and $\Pi_n$ hierarchies of first-order formulas based on the number of quantifier alternations.

\begin{definition}
\label{sigma_n}
A $k$-parametric PA formula $\Phi_{\t}(\x)$ is in $\Sigma_1$ (respectively, $\Pi_1$) if it is logically equivalent to one of the form
\[Q_{1}y_{1} \; Q_{2}y_{2} \; \dots Q_{m}y_{m} \;\; \Theta_\bft(\x, \y) \]
in which every quantifier $Q_i$ is $\exists$ (respectively, every $Q_i$ is $\forall$), and $\Theta_\bft(\x, \y)$ is a Boolean combination of linear inequalities with coefficients in $\Z[\bft]$.

Inductively, a $k$-parametric PA formula $\Phi_{\t}(\x)$ is in $\Sigma_{n+1}$ (respectively, $\Pi_{n+1}$) if it is equivalent to one of the form
\[Q_{1}y_{1} \; Q_{2}y_{2} \; \dots Q_{m}y_{m} \;\; \Phi'_{\t}(\x, \y) \]
in which every $Q_i$ is $\exists$ (respectively, $\forall$) and $\Phi'_{\t}(\x, \y)$ is a formula in $\Pi_n$ (respectively, $\Sigma_n$).
\end{definition}

\begin{thm} \label{thm:2PPA}
  Assume $\poly \neq \NP$. There exists a $2$-parametric $\Sigma_2$  PA family $S_{t_{1},t_{2}}$ for which $|S_{t_{1},t_{2}}|$ is always finite but cannot be expressed as a polynomial time evaluable function in $t_{1}$ and $t_{2}$.

\end{thm}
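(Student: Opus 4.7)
The plan is to reduce a $\#\mathsf{P}$-hard problem to the evaluation of $|S_{t_1,t_2}|$ for some fixed $\Sigma_2$ 2-parametric family. The crucial leverage is that coefficients come from $\Z[t_1,t_2]$, so as $(t_1,t_2)$ varies over $\Z^2$ the effective numerical data in the formula can be exponentially large in the input size $\log t_1 + \log t_2$. A single bounded-size $\Sigma_2$ formula therefore suffices to encode instances of unbounded size, which is what opens the door to hardness despite the fixed quantifier and atom structure.

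I would fix a canonical $\#\mathsf{P}$-complete problem whose natural verification has the $\Sigma_2$ ``$\exists$ witness, $\forall$ local check'' shape; $\#3$-SAT is a clean choice. An instance (a list of clauses, each a triple of signed variable indices) is packed into a single integer $t_2$ using a positional encoding in a base $B$, and one sets $t_1 := B$. The formula then takes the shape
\[
\Phi_{t_1,t_2}(x) \;\equiv\; \exists\,\u \;\forall\,\v \;\Theta(x,\u,\v;t_1,t_2),
\]
where the existential block $\u$ guesses a candidate satisfying assignment together with whatever auxiliary data is needed to certify a Cook--Levin style tableau, $\v$ indexes positions at which a local consistency check is to be performed, and $\Theta$ is a fixed Boolean combination of linear inequalities with coefficients in $\Z[t_1,t_2]$. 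The free variable $x$ enumerates valid witnesses so that $|S_{t_1,t_2}|$ equals the number of satisfying assignments of the instance encoded in $(t_1,t_2)$.

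The main technical obstacle, and what forces the use of two parameters rather than one, is \emph{digit extraction}: to check that the clause stored at position $i$ in $t_2$ is satisfied by the $j$-th bit of the guessed assignment, one needs access to $t_1^i$, yet the map $i \mapsto t_1^i$ is not Presburger-definable from $i$. I would handle this by including in $\u$ an explicit encoding of the list of powers $t_1^0, t_1^1, \ldots$ up to a polynomial bound, bundled into a single integer via a pairing or Chinese-remainder-style trick, and then using the universal $\v$ to verify (i)~the multiplicative recurrence relating consecutive powers, and (ii)~the local clause-satisfaction condition at position $\v$. Both checks reduce to linear inequalities over $\Z[t_1,t_2]$, so the whole construction remains within $\Sigma_2$ and the number of quantifiers and atoms is fixed.

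Finiteness of $S_{t_1,t_2}$ is enforced by adding explicit polynomial upper bounds on $x$ and on each component of $\u$. A polynomial-time algorithm computing $|S_{t_1,t_2}|$ would then yield a polynomial-time algorithm for $\#3$-SAT, contradicting $\#\mathsf{P} \neq \mathsf{FP}$ (which follows from $\poly\neq\NP$, since a polynomial-time count would decide satisfiability by testing $|S_{t_1,t_2}|\geq 1$). The step that will demand the most care is the design of the digit-extraction gadget: it must be frugal enough to live within a fixed $\Sigma_2$ formula using only two parameters, yet expressive enough to simulate a polynomial-size $\NP$-verifier on instances whose description length grows with $\log t_1 + \log t_2$.
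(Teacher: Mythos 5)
Your overall strategy is in the right spirit---use two parameters to pack exponentially much data into the input and reduce from a hard counting problem---but the digit-extraction gadget on which everything hinges has a genuine gap, and I don't see how to repair it within the language. You want to existentially guess, inside a \emph{fixed} $\Sigma_2$ formula, an encoding of the whole list $t_1^0, t_1^1, \ldots, t_1^n$ with $n$ growing like $\log t_2$, and then let the universal variable $\v$ extract the $\v$-th entry and check the recurrence. But in $k$-parametric PA the only multiplication available is ``scalar $\times$ variable'' with the scalar a fixed polynomial in $\t$; multiplication between two \emph{quantified} variables is never allowed. Every standard sequence-encoding device falls afoul of this: positional encoding of the list in some base $B$ requires access to $B^\v$, which is exactly what you are trying to build; G\"odel's $\beta$-function requires the product $\v \cdot z_2$ of two quantified variables; a Cantor-style pairing function is a quadratic polynomial in two variables; and even if you try to salvage the $\beta$-trick by taking $z_2$ to be a polynomial in $(t_1,t_2)$ (so that $\v \cdot z_2$ is admissible), the moduli $1 + (\v+1)z_2$ grow only linearly in $\v$ and cannot store the exponentially large values $t_1^\v$. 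Storing each power in its own existential variable is also out, since the number of powers is unbounded while the formula has a fixed number of quantifiers. So the claim ``both checks reduce to linear inequalities over $\Z[t_1,t_2]$'' is precisely where the argument breaks.

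The paper's proof avoids digit extraction entirely by taking a different route. It invokes the Nguyen--Pak hardness theorem for \emph{short} Presburger formulas: one reduces from \textsc{AP-COVER}, and the union $\bigcup_i \AP_i$ is encoded, via continued fractions, into a single rational $p/q$ whose best approximations (the integer points on the ``sail'' below the line of slope $p/q$) recover the progressions. This is what a fixed $\Sigma_2$ formula \emph{can} see: membership in the sail is expressed by a $\forall$-clause saying ``no better rational approximant lies strictly in between,'' which is purely linear. That already yields a $3$-parametric $\Sigma_2$ family $S_{p,q,M}$ that is $\NP$-hard to decide nonempty. The paper then collapses three parameters to two by the Glivick\'y--Pudl\'ak substitution $t_1 = pM$, $t_2 = pqM^2 + M$, after which $\gcd(t_1,t_2) = M$ recovers congruence mod $M$, $\floor{t_1^2/t_2} = \floor{p/q}$, and the division algorithm $t_2 j = t_1 r + s$ with $0 \le s < t_1$ recovers $r = qMj$, $s = Mj$ for $0 \le j < p$. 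These are small, explicit lemmas rather than a general-purpose sequence-encoding machine. In short: the hard part is not packaging an $\NP$ verifier into a $\Sigma_2$ formula, but rather designing an $\NP$-hard problem whose structure is \emph{already} linear-arithmetic-friendly, and that is the role of the continued-fraction construction, which your proposal would need to reinvent or replace.
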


Two corollaries are:

\begin{cor}\label{cor:2_param}\label{cor:positive_hard}
There is a $2$-parametric family $S_{t_{1},t_{2}}$ such that the set of $(t_{1},t_{2}) \in \Z^{2}$ for which $|S_{t_{1},t_{2}}|$ is positive cannot be described using polynomial-time relations in $t_{1},t_{2}$.
\end{cor}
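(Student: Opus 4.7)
The plan is to derive Corollary~\ref{cor:positive_hard} as a direct consequence of Theorem~\ref{thm:2PPA}. Let $S_{t_1,t_2}$ denote the 2-parametric $\Sigma_2$ family provided by Theorem~\ref{thm:2PPA}, whose cardinality is finite but not polynomial-time computable.

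My first attempt would be to show that this very family (or a small variant of it) already has a non-polynomial-time positivity set. The heuristic is that hardness of $\abs{S_{t_1,t_2}}$ typically reflects hardness of an underlying decision problem: if the proof of Theorem~\ref{thm:2PPA} proceeds by a parsimonious reduction from an $\NP$-hard decision problem (as is standard in such hardness constructions), then the elements of $S_{t_1,t_2}$ biject with the witnesses of an instance encoded by $(t_1,t_2)$, and the relation ``$\abs{S_{t_1,t_2}} > 0$'' is exactly the $\NP$-hard decision version of that problem. Under the assumption $\poly \neq \NP$, this yields the corollary immediately with the same family.

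If instead the construction of Theorem~\ref{thm:2PPA} happens to produce a family whose positivity is trivial (for example, $\abs{S_{t_1,t_2}} \geq 1$ always, with only the exact cardinality being hard to compute), I would fall back on a reduction argument. The idea is to construct an auxiliary 2-parametric family $S'_{t_1,t_2}$ whose positivity encodes a threshold query of the form ``$\abs{S_{u_1,u_2}} \geq c$'', where the triple $(u_1,u_2,c)$ is packed into $(t_1,t_2)$ using a parameter-dependent base-$C$ decomposition. Such decodings are first-order definable in parametric PA because its language admits polynomial coefficients in the parameters; concretely, one can write $t_1 = C \cdot u_1 + u_2$ with $0 \leq u_1, u_2 < C$ and absorb $C$ and $c$ into $t_2$ using further modular constraints. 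A polynomial-time decision procedure for positivity of $S'$ would then allow binary search on $c$ to recover $\abs{S_{u_1,u_2}}$ in polynomial time, contradicting Theorem~\ref{thm:2PPA}.

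The main obstacle in the second scenario is expressing ``$\abs{S_{u_1,u_2}} \geq c$'' as a single fixed PA formula with $c$ appearing as a parameter rather than a constant, since quantified variables in PA must occur linearly and one cannot simply write $c$ existential quantifiers. This is handled by introducing an auxiliary universally quantified coordinate ranging over $[0, c)$ together with an existential Skolem function mapping this coordinate injectively into $S_{u_1,u_2}$, yielding a formula in roughly $\Sigma_3$ form for $S'_{t_1,t_2}$; the corollary places no restriction on the quantifier alternation of $S'$, so this cost is acceptable. I expect that the first (direct) scenario is what actually applies, so that the proof of the corollary amounts to an inspection of the reduction inside Theorem~\ref{thm:2PPA}, with the second scenario serving as a fallback should the constructed family turn out to have trivially decidable positivity.
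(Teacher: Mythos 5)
Your first scenario does not apply: in the construction of Theorem~\ref{thm:2PPA}, the set $S_{t_1,t_2}$ consists of the \emph{covered} points $[\mu,\nu]\cap\bigcup_i \AP_i$, and the reduction sets $\AP_1=\{\nu\}$, so $\nu$ always lies in $S_{t_1,t_2}$ and positivity is trivially true. The hard question is not whether $|S_{t_1,t_2}|>0$ but whether $|S_{t_1,t_2}|=\floor{t_1^2/t_2}$, i.e.\ whether \emph{every} point of the interval is covered. So you are forced into your fallback.

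Your fallback has a genuine gap. Expressing ``$|S_{u_1,u_2}|\ge c$'' as a \emph{fixed} parametric PA formula with $c$ a parameter is not achievable by the device you describe: a ``Skolem function mapping $[0,c)$ injectively into $S$'' is a second-order object, not something you can quantify over in PA, and without that you would need $c$-many existential quantifiers, which is not a fixed formula. (Gödel-style sequence coding could collapse these into one quantifier, but that requires multiplication between bound variables, which parametric PA deliberately disallows.) The packing of $(u_1,u_2,c)$ into two parameters and the binary search are fine in spirit, but the core subformula you need is not first-order expressible in this language, so the construction of $S'$ does not go through.

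The paper's actual proof is far simpler and exploits the structure you half-noticed: the $\NP$-hard predicate is the existence of an \emph{un}covered point, so one takes the bounded complement. Concretely, with $\Psi_{t_1,t_2}(z)$ the formula from Theorem~\ref{thm:2PPA}, one defines $\Psi'_{t_1,t_2}(z)\coloneqq (0<z\le t_1^2/t_2)\;\land\;\lnot\Psi_{t_1,t_2}(z)$. By \eqref{eq:equal_intersection} this picks out exactly the $z\in[\mu,\nu]\setminus\bigcup_i\AP_i$, so $|S'_{t_1,t_2}|>0$ is literally the $\textsc{AP-COVER}$ question, which (assuming $\poly\neq\NP$) cannot be decided in polynomial time. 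The bound $0<z\le t_1^2/t_2$ is essential because the unrestricted negation $\lnot\Psi_{t_1,t_2}$ is satisfied by infinitely many $z$. Your heuristic that ``hardness of counting typically reflects hardness of an underlying decision problem'' was the right instinct, but you needed to apply it to the complemented family rather than to $S_{t_1,t_2}$ itself, and there was no need for any threshold or binary-search machinery.
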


\begin{cor}\label{cor:2_param_Presburger}\label{cor:qe_hard}
Any extension of $2$-parametric PA with only polynomial-time computable predicates cannot have full quantifier elimination.
\end{cor}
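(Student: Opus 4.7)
The plan is to derive Corollary \ref{cor:qe_hard} as a direct logical packaging of Corollary \ref{cor:positive_hard}. Suppose, for contradiction, that $\mathcal{L}'$ is an extension of the language of $2$-parametric PA whose non-logical symbols are all polynomial-time computable in the bit-length of their arguments, and suppose further that $\mathcal{L}'$ admits full quantifier elimination, i.e.\ every $\mathcal{L}'$-formula is logically equivalent to a quantifier-free $\mathcal{L}'$-formula.

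First, I would invoke Corollary \ref{cor:positive_hard} to pin down a specific $2$-parametric PA family $S_{t_1,t_2} = \{\x \in \Z^d : \Phi_{t_1,t_2}(\x)\}$ such that the positivity relation $P(t_1,t_2) \Leftrightarrow |S_{t_1,t_2}| > 0$ admits no polynomial-time algorithm. Next, I would form the parametric PA formula (with only $t_1,t_2$ free)
\[ \Xi(t_1,t_2) \;:\equiv\; \exists x_1 \cdots \exists x_d \; \Phi_{t_1,t_2}(\x), \]
which defines exactly $P$. Since $\Xi$ is in particular an $\mathcal{L}'$-formula, by the hypothesized QE it is logically equivalent to some quantifier-free $\mathcal{L}'$-formula $\Psi(t_1,t_2)$.

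The crux is then to show that $\Psi$ yields a polynomial-time decision procedure for $P$. Because $\Psi$ is a fixed Boolean combination of atomic $\mathcal{L}'$-formulas, and each atomic formula applies a polynomial-time predicate of $\mathcal{L}'$ to a fixed tuple of terms built from $t_1,t_2$, $0$, $1$, $+$, the parametric multiplication symbols $\lambda_p(\t)$, and the additional function symbols of $\mathcal{L}'$ (all of which are polynomial-time by assumption), evaluating $\Psi(a_1,a_2)$ on any input $(a_1,a_2) \in \Z^2$ requires only a fixed number of polynomial-time operations. The outputs of these operations have polynomial bit-length in the input, so the whole evaluation runs in polynomial time. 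This contradicts Corollary \ref{cor:positive_hard}.

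The main obstacle, such as it is, is interpretive rather than mathematical: one must make sure the phrase ``only polynomial-time computable predicates'' is read as also requiring that the non-logical function symbols of $\mathcal{L}'$ (and not merely the relation symbols) are polynomial-time computable and output values of polynomially bounded bit-length—otherwise atomic formulas could not be evaluated efficiently. Under that natural reading, the proof is essentially immediate, and all the real work has already been done in establishing Theorem \ref{thm:2PPA} and Corollary \ref{cor:positive_hard}.
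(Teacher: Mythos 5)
Your proposal is correct and matches the paper's own argument: the paper likewise deduces this corollary from Corollary~\ref{cor:positive_hard} by applying hypothesized quantifier elimination to $\exists z\, \Psi'_{t_1,t_2}(z)$ and observing that the resulting quantifier-free formula would be a Boolean combination of polynomial-time relations in $t_1,t_2$, contradicting the hardness of the positivity condition. Your extra caveat about function symbols and output bit-length is a reasonable clarification, but the paper sidesteps it by speaking only of polynomial-time \emph{predicates}.
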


\subsection{Structure of the rest of the paper}
We will present what amount to two different proofs of Theorem~\ref{thm:2PPA} in the following two sections. In each case, we leverage the main result of Nguyen and Pak \cite{NP} which yields a $3$-parametric $\Sigma_2$ PA formula, and then show how this can be reduced to a $2$-parametric $\Sigma_2$ PA formula whose points are equally ``hard'' to count (modulo polynomial-time reductions). The first reduction we present, in Section 2, uses a trick due to Glivick\'{y} and Pudl\'ak \cite{GP} to encode multiplication by three different integers using multiplication by only two integers, and this reduction has the advantage of not increasing the number of free variables in the formula. Next, in Section 3 we present a more general counting-reduction technique which is less \emph{ad hoc} and reduces any $k$-parametric PA formula to a $2$-parametric PA formula with the same number of quantifier alternations; the idea here is a little more transparent than in Section 2, but it has the disadvantage of introducing many more new free and quantified variables to the formula, so we consider that it is interesting to present both reductions.



In Section 4 we consider a variant of Question \ref{quest:main} in which there is no order relation in our language; that is, we can only express linear equations but not linear inequalities. Quantifier-free formulas in this language
define finite unions of \emph{lattice translates}. This setting was studied in detail from a model-theoretic perspective by van den Dries and Holly \cite{vdDH}, and we apply their results to show that, in contrast to Theorem \ref{thm:2PPA}, the counting functions in the unordered setting can be computed in polynomial time, regardless of the number of parameters and of quantifier alternations. Indeed, these functions can be expressed using gcd and related functions.

Finally, in Section 5 we discuss the optimality of Theorem~\ref{thm:2PPA} by explaining what happens when we weaken or modify some of the hypotheses.

\bigskip

\section{Proof of Theorem \ref{thm:2PPA} and its corollaries}\label{sec:main_proof}
In what follows, it will be convenient to allow $k$-parametric PA formulas in which the quantifiers are not necessarily outside the scope of all Boolean operations, but these are always logically equivalent to expressions as in (\ref{eq:multi_Presburger}); for instance, $$\exists y_1 \left[\Theta_t(\x, y_1)\right] \wedge \exists y_1 \left[\Theta'_\t(\x, y_1) \right]$$ is equivalent to $$\exists y_1 \, \exists y_2 \, \left[\Theta_\t(\x, y_1) \wedge \Theta'_\t(\x, y_2) \right].$$

In~\cite{NP}, certain subclasses of classical PA formulas, called \emph{short PA formulas}, were investigated.
The PA formulas in each such subclass are allowed to have only a bounded number of variables, quantifiers and inequalities (atomic formulas).
The main problem was to classify the complexity (of counting and decision) for those short PA subclasses.
It was proved that a simple subclass with only $5$ variables, $2$ quantifier alternations and $10$ inequalities is $\NP$-complete to decide, and also $\#\mathsf{P}$-complete to count.
Combined with the positive results in~\cite{barvinok94,BW03}, this settled the last open subcase of classical PA complexity problems.
The main reduction in~\cite{NP} started with the following $\NP$-complete problem:

\begin{prob}
  \textsc{AP-COVER}: Given an interval\footnote{All intervals in the paper are over $\Z$, so $[a,b]$ with $a,b \in \R$ should be understood as $[a,b] \cap \Z$.} $[\mu,\nu] \subset \Z$ and $n$ arithmetic progressions
  \[ \AP_{i} = \AP(g_{i},h_{i},e_{i}) \coloneqq \{g_{i}, g_{i} + e_{i}, \dots, g_{i} + h_{i}e_{i}\},\]
  with $1 \le \mu \le \nu$, $g_{i},h_{i},e_{i} \in \Z$, $h_{i} \ge 1$, decide if there exists some $z \in [\mu,\nu] \backslash \bigcup_{i=1}^{n} \AP_{i}$.
\end{prob}

In other words, the problem asks whether there is some element in the interval $[\mu,\nu]$ not covered by the given arithmetic progressions.
The problem is clearly invariant under a translation of both $[\mu,\nu]$ and the $\AP_{i}$'s, so we can assume $\mu = 1$.
Also without affecting the complexity, we can assume that $g_{1} = \nu, h_{1} = 1, e_{1} = 0$, i.e., $\AP_{1} = \{\nu\}$.
The main argument in~\cite{NP} uses continued fractions to construct an integer $M$ and a rational number $p/q$ such that the \emph{best approximations} of $p/q$, in the terminology of continued fractions, encode $\bigcup_{i=1}^{n} \AP_{i}$ modulo $M$.
The main point is that $p/q$ should satisfy $\floor{p/q} = g_{1} = \nu$, so that $[\mu,\nu] = [1,p/q]$, and the formula
\begin{equation} \label{eq:NP1}
\gathered
\Phi_{p,q,M}(z) \; = \;  1 \le z \le p/q \; \land \; \ex \y \;\; 
y_{2} \equiv z \mod{M}
\; \land \; 
\floor{p/q} \le y_{2} < p \; \land \; qy_{2} < py_{1} \; \land \; \\ \for \x \;\;\; 
\lnot
\left\{
\begin{matrix}
py_{1} - qy_{2} \; \ge \; px_{1} - qx_{2} \; \ge \;  0\\
y_{2} \;>\; x_{2} \;>\; 0
\end{matrix}
\right\}
\endgathered
\end{equation}
satisfies the property
\begin{equation}\label{eq:equal_intersection}
\{z \in \Z : \Phi_{p,q,M}(z)   \} = [\mu,\nu] \cap (\bigcup_{i=1}^{n} \AP_{i}).
\end{equation}
Thus, the original $\textsc{AP-COVER}$ instance is \emph{not} satisfied if and only if $|S_{p,q,M}| = |[\mu,\nu]| = \floor{p/q}$.
We emphasize that $p,q,M$ can be computed in polynomial time from $\mu,\nu,g_{i},h_{i},e_{i}$.
The meaning behind this formula can be explained as follows.

In Figure~\ref{f:continued_fraction}, the line $y_{2}/y_{1}=p/q$ divides the positive orthant into two parts.
The integer hull of the points strictly below this line and above the horizontal axis form a polyhedron, whose boundary is the (bold) convex polygonal curve $\C$, starting at $(1,0)$ and ending at $(q,p)$.
Denote by $\C_{i}$ the $i$-th edge of $\C$ above the (dotted) horizontal line $y_{2}=g_{1}=\floor{p/q}$.
Then for every $1 \le i \le n$ we have $\AP_{i} = \{y_{2} \mmod M \, : \, (y_{1},y_{2}) \in \C_{i} \}$, and thus
$$\bigcup_{i=1}^{n} \AP_{i} = \{y_{2} \mmod M \, : \, (y_{1},y_{2}) \in \C,\, y_{2} \ge g_{1}\}.$$

\begin{figure}[hbt]
\begin{center}

\psfrag{O}{\small$O$}
\psfrag{1}{\small$(1,0)$}
\psfrag{C}{\small$\C$}
\psfrag{p,q}{\small$(q,p)$}
\psfrag{l}{\small$\frac{y_{2}}{y_{1}} = \frac{p}{q}$}
\psfrag{g}{\small$y_{2} = g_{1} = \floor{p/q}$}

\psfrag{y1}{\small$y_1$}
\psfrag{y2}{\small$y_2$}

\epsfig{file=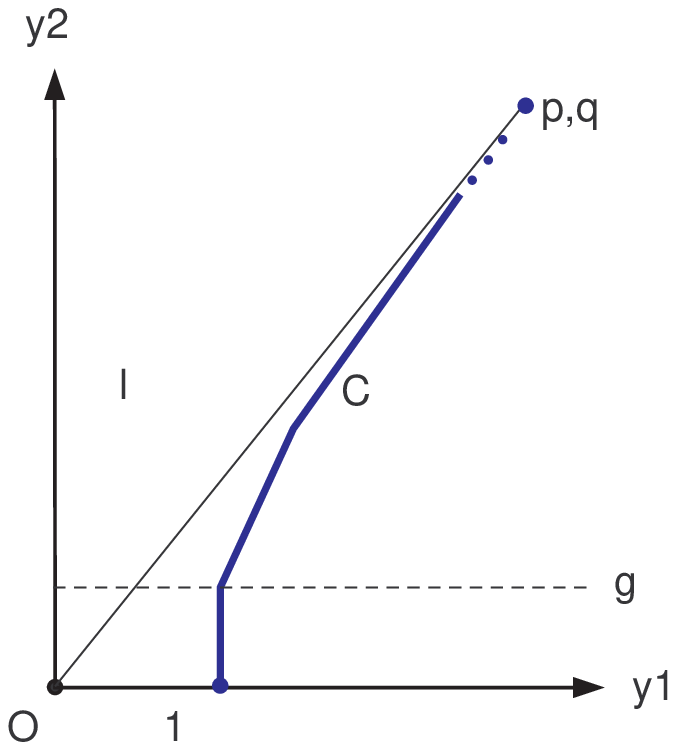,width=5cm}
\end{center}

\vspace{-2em}

\caption{The (bold) sail $\C$ below the line $y_{2}/y_{1}=p/q$.}
\label{f:continued_fraction}
\end{figure}

In \eqref{eq:NP1}, we express $z \in  [\mu,\nu] \cap \big(\bigcup_{i=1}^{n} \AP_{i}\big) $ as $z \equiv y_{2} \mod{M}$ for some $(y_{1},y_{2})$ with $\floor{p/q} \le y_{2} < p$ and  $(y_{1},y_{2}) \in \C$.\footnote{The curve $\mathcal{C}$ includes $(p,q)$ in~\cite{NP}, but not here. This small difference is not very significant as one can easily check.}
By a basic property of continued fractions (see e.g.~\cite{Karpenkov}), the condition $(y_{1},y_{2}) \in \mathcal{C}$ is equivalent to saying that $qy_{2} < py_{1}$, and there is no other integer point $(x_{1},x_{2})$ with $y_{2} > x_{2} > 0$ such that $x_{2}/x_{1}$ approximates $p/q$ better than $y_{2}/y_{1}$.
This last condition is expressed by the $\for \x \dots$  clause in $\Phi_{p,q,M}$.

\medskip

A hardness result for 3-parameter PA immediately follows.

\begin{prop} \label{prop:3params}
Assume $\poly \neq \NP$. There exists a $3$-parametric $\Sigma_2$ PA family $S_{p,q,M}$ such that $|S_{p,q,M}|$ is always finite but cannot be expressed as a polynomial-time evaluable function in $p$, $q$, and $M$. 
\end{prop}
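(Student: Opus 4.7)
The plan is to directly leverage the apparatus of \cite{NP} recalled in the preceding discussion. I would define
\[
S_{p,q,M} \;\coloneqq\; \{z \in \Z : \Phi_{p,q,M}(z)\}
\]
where $\Phi_{p,q,M}$ is the formula \eqref{eq:NP1}. Counting quantifier alternations, the innermost $\lnot\{\ldots\}$ block is a Boolean combination of linear inequalities with coefficients in $\Z[p,q,M]$, the $\forall \x$ makes the body $\Pi_1$, and the prefixed $\exists \y$ puts the whole formula into $\Sigma_2$, as claimed. Finiteness is immediate from the bound $1 \le z \le p/q$, which yields $S_{p,q,M} \subseteq [1, \floor{p/q}]$, and hence $|S_{p,q,M}| \le \floor{p/q}$ for every admissible $(p,q,M)$.

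For the hardness half I would reduce from $\textsc{AP-COVER}$. Assume toward contradiction that there is an algorithm $\mathcal{A}$ computing $|S_{p,q,M}|$ in time polynomial in the bit-length of $(p,q,M)$. Given an $\textsc{AP-COVER}$ instance $([\mu,\nu], \AP_1, \dots, \AP_n)$, the plan is first to apply the preparatory normalizations described in the excerpt (translating so that $\mu = 1$ and taking $\AP_1 = \{\nu\}$), and then to invoke the construction of \cite{NP} to produce $p,q,M$ in polynomial time so that \eqref{eq:equal_intersection} holds and $\floor{p/q} = \nu$. Since $\nu \in \AP_1 \subseteq \bigcup_i \AP_i$, the interval $[1,\nu]$ is entirely covered by the $\AP_i$'s exactly when $|S_{p,q,M}| = \floor{p/q}$; equivalently, the $\textsc{AP-COVER}$ instance is satisfied iff $|S_{p,q,M}| < \floor{p/q}$. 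Running $\mathcal{A}$ on $(p,q,M)$ and comparing its output to $\floor{p/q}$ therefore decides $\textsc{AP-COVER}$ in polynomial time, contradicting $\poly \ne \NP$.

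The substantive content of the argument sits inside \cite{NP}, so no single step should be difficult. The only obstacles I anticipate are bookkeeping: (i) checking that the bit-sizes of $p,q,M$ stay polynomially bounded in the size of the $\textsc{AP-COVER}$ input (asserted in \cite{NP}), and (ii) verifying that the shorthand inequalities inside \eqref{eq:NP1} — in particular $z \le p/q$ and $\floor{p/q} \le y_2$ — can be rewritten as honest linear inequalities with coefficients in $\Z[p,q,M]$, for instance as $qz \le p$ and $qy_2 \ge p - q + 1$ under the construction's standing assumption $q \ge 1$. The congruence $y_2 \equiv z \mod M$ likewise absorbs into the $\exists \y$ block by introducing an auxiliary existential variable $k$ with $Mk = y_2 - z$. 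Once these routine points are in place, the $\Sigma_2$ classification and the polynomial-time reduction are both immediate.
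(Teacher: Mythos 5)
Your proposal is correct and follows essentially the same route as the paper's proof: take $\Phi_{p,q,M}$ from \eqref{eq:NP1}, clear denominators and encode the congruence existentially to get a genuine $\Sigma_2$ PA formula, note finiteness from $1 \le z \le p/q$, and then observe that a polynomial-time evaluator for $|S_{p,q,M}|$ would decide $\textsc{AP-COVER}$ by comparing the count to $\floor{p/q}$, contradicting $\poly\neq\NP$. The only difference is that you spell out a few routine bookkeeping details (the cross-multiplied inequalities and the auxiliary variable for the congruence) that the paper leaves implicit.
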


\begin{proof}
  We can clear the integer denominators in \eqref{eq:NP1} by cross multiplications. The condition
  \[ y_{2} \equiv z \mod{M} \]
  can be expressed with existential quantifiers. 
Thus we obtain a 3-parametric $\Sigma_2$ PA formula $\Phi_{p,q,M}$, which defines a family $S_{p,q,M}$.
The set of satisfying values $z$ is finite by $1 \leq z \leq p/q$.
  Now assume $|S_{p,q,M}|$ is a polynomial-time evaluable function $f(p,q,M)$.
  Then given any AP-COVER instance, we can compute $p,q,M$ in polynomial time from the $\AP_{i}$'s, and then evaluate $f(p,q,M)$ in polynomial time to check whether $f(p,q,M)=\floor{p/q}$.
  This contradicts $\poly \neq \NP$.
\end{proof}

It remains to reduce the three parameters $p, q, M$ to two. To do this, we will adapt a trick of Glivick\'{y} and Pudl\'ak \cite{GP}. Their context is slightly different from ours in that they use nonstandard integers rather than parameters that range over $\Z$, and that their results involve computability rather than complexity. However their key idea and its proof apply in our context.   
The two parameters that will be involved are 
\begin{equation}\label{eq:new_params}
t_1 = pM, \;\;\; t_2 = pqM^2 + M.
\end{equation}

For convenience, we will assume for the rest of Section $2$ that all the parameters in our formulas ($t_1, t_2, p, q,$ and $M$) only take nonnegative integer values. Although in other parts of this paper the parameters are assumed to range over $\Z$, this restriction does not affect the hardness results we are proving here.

\begin{prop} \cite[\S 3.2]{GP} \label{prop:GP} For $0 \leq j < p$, the three multiplications $j \mapsto pM j,\, j \mapsto qM j,\, j \mapsto Mj$ can be defined by using just two multiplications $j \mapsto t_1j$ and $j \mapsto t_2 j$.
\end{prop}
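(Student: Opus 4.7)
The plan is to exploit the simple algebraic identity
\[
t_2\cdot j \;=\; (pqM^2+M)\cdot j \;=\; (qMj)\cdot(pM)+Mj \;=\; (qMj)\cdot t_1 + Mj,
\]
and to interpret this as the statement of Euclidean division of $t_2 j$ by $t_1$. The crucial observation is that when $0\le j<p$ we have $0\le Mj<pM=t_1$, so the decomposition above has the remainder in the correct range, and therefore $Mj$ and $qMj$ are determined as the remainder and quotient, respectively, of $t_2 j$ divided by $t_1$.

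Using this, I would exhibit explicit defining formulas in the language of $2$-parametric PA that refer only to the multiplications $j\mapsto t_1 j$ and $j\mapsto t_2 j$ (and to multiplication by $t_1$ of another variable, which uses the same unary function symbol). Namely, for $j\ge 0$ with $Mj<t_1$ (which is a way of encoding $j<p$ using $t_1$):
\begin{align*}
u = pMj &\;\Longleftrightarrow\; u = t_1\cdot j,\\
v = Mj &\;\Longleftrightarrow\; \exists\, w\ge 0 \;\bigl[\, t_2\cdot j = w\cdot t_1 + v \;\wedge\; 0\le v < t_1\,\bigr],\\
w = qMj &\;\Longleftrightarrow\; \exists\, v\ge 0 \;\bigl[\, t_2\cdot j = w\cdot t_1 + v \;\wedge\; 0\le v < t_1\,\bigr].
\end{align*}
Correctness follows immediately from the identity above together with uniqueness of Euclidean division on the nonnegative integers.

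There is essentially no hard step here: once the identity $t_2 = qM\cdot t_1 + M$ is written down, the definability is a direct consequence of the fact that PA can express Euclidean division by a fixed parameter using one existential quantifier. The only mild subtlety, which I would flag explicitly, is that the range restriction $0\le j<p$ is necessary for the remainder $Mj$ to lie in $[0,t_1)$; outside this range the quotient/remainder interpretation would fail. This justifies the hypothesis in the statement and explains why the two parameters $t_1,t_2$ suffice to \emph{simulate} multiplication by $pM$, $qM$, and $M$ on the relevant interval, which is exactly what is needed for the subsequent reduction from Proposition~\ref{prop:3params}.
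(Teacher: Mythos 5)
Your proof is correct and follows essentially the same route as the paper's: both exploit the identity $t_2 j = (qMj)\cdot t_1 + Mj$ together with the observation that $0 \le Mj < t_1 = pM$ when $0 \le j < p$, so that $qMj$ and $Mj$ are recovered as the quotient and remainder in Euclidean division of $t_2 j$ by $t_1$, expressible via one existential quantifier. The only cosmetic difference is that the paper packages the quotient and remainder into a single formula $\Div_{t_1,t_2}(j,r,s)$ rather than giving two separate defining formulas, and it leaves the hypothesis $0\le j<p$ external (handled later via $\Equalp_{t_1,t_2}$) rather than trying to encode it as $Mj<t_1$, which would be circular since $Mj$ is one of the quantities being defined.
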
  

\begin{proof}
By definition, we have $t_1j = pMj$ for all $j$, so it remains to define the multiplications by $qMj$ and $Mj$ for $0 \leq j < p$. By the division algorithm, for every $j \geq 0$ we can uniquely write
  $$ (pqM^{2}+M)j = (pM)r + s, \quad \text{where} \quad 0 \le r \;\; \text{and} \;\; 0 \le s < pM. $$
  If $0 \leq j < p$, then $s = Mj \, \mod{pM} = Mj$ and we can then solve to obtain $r = qMj$. Thus for $0 \leq j < p$, the formula
  \begin{equation}\tag{$\Div_{t_1,t_2}(j,r,s)$} t_2j = t_1r +s \; \land \; 0 \le r \; \land \;  0 \le s < t_{1} \end{equation}
is satisfied by the triple $(j, qMj, Mj)$. Furthermore, for such $j$ this formula cannot be satisfied by any other values of the second and third arguments.
\end{proof}

We now prove some additional capabilities of the parameters $t_1 = pM$, $t_2 = pqM^2 + M$ that will be required in order to transform the entire formula \eqref{eq:NP1} into a formula in $t_1$ and $t_2$ alone. 

\begin{lem} \label{lem:cong}
  The congruence relation modulo $M$ is definable using just the multiplications by $t_{1}$ and $t_{2}$.
\end{lem}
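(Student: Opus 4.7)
The plan is to use the relation $\Div_{t_1,t_2}$ from Proposition~\ref{prop:GP} to pin down the set of multiples of $M$ in the interval $[0, t_1)$, and then express ``$M \mid c$'' as the statement that the residue of $c$ modulo $t_1$ lies in this set. First I would extend the analysis in the proof of Proposition~\ref{prop:GP} beyond the range $0 \le j < p$. Writing $t_2 = qM \cdot t_1 + M$ immediately gives $t_2 j = (qMj) t_1 + Mj$ for every $j \ge 0$, and reducing $Mj$ modulo $t_1 = pM$ shows that the unique $(r,s)$ with $r \ge 0$ and $0 \le s < t_1$ satisfying $\Div_{t_1,t_2}(j,r,s)$ has $s = (j \bmod p) \cdot M$. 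Also, $\Div_{t_1,t_2}(j,r,s)$ forces $t_2 j \ge 0$ and hence $j \ge 0$. Therefore, as $j$ runs over $\Z_{\ge 0}$, the attainable values of $s$ are exactly $\{0, M, 2M, \ldots, (p-1)M\}$, the complete set of multiples of $M$ in $[0, t_1)$.

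With this in hand, I would then define divisibility by $M$ via the formula
\[
D_M(c) \;\coloneqq\; \exists j,\, r,\, s,\, u \;\; \bigl( \Div_{t_1,t_2}(j, r, s) \,\wedge\, c = t_1 u + s \bigr).
\]
The equation $c = t_1 u + s$ combined with the constraint $0 \le s < t_1$ coming from $\Div_{t_1,t_2}$ pins down $s$ as the unique residue of $c$ modulo $t_1$, and by the previous paragraph this residue is a multiple of $M$ iff $M \mid c$. The congruence relation is then $a \equiv b \pmod{M} \iff D_M(a-b)$, where $a-b$ is expressible in $\mathcal{L}$ via multiplication by the integer constant $-1$. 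All the operations used in $D_M$ are permitted in the language generated by $+$, $\le$, and multiplication by $t_1$ and $t_2$.

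The main point requiring care is the extension of Proposition~\ref{prop:GP} past $j < p$: one must check that $\Div_{t_1,t_2}$ produces the same $s$-values for $j$ and $j + p$ and hence cycles through \emph{all} the multiples of $M$ in $[0, t_1)$, rather than only the ones with $j \in [0,p)$. This is really the only nontrivial step, and it reduces to the short computation above. Once it is in place, translating into an $\mathcal{L}$-formula and unwinding ``$a \equiv b \pmod M$'' as ``$M \mid (a-b)$'' is routine.
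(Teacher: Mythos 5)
Your proof is correct, but it takes a noticeably more roundabout path than the paper's. The paper's proof is a one-liner: it observes that $\gcd(t_1, t_2) = M$ (since $t_1 = pM$ and $t_2 = M(pqM+1)$ with $\gcd(p, pqM+1) = 1$), and then invokes B\'ezout's identity to write $b \equiv c \pmod{M}$ as $\exists w_1, w_2 \; (b - c - t_1 w_1 - t_2 w_2 = 0)$. This needs only two auxiliary existential variables and no reference to $\Div_{t_1,t_2}$ at all. Your approach instead extends Proposition~\ref{prop:GP} beyond the range $0 \le j < p$ to show that the attainable remainders $s$ in $\Div_{t_1,t_2}(j,r,s)$ are exactly the multiples of $M$ in $[0,t_1)$, and then tests whether the residue of $c$ modulo $t_1$ lies in that set. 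Both are valid. What the paper's version buys is economy (fewer variables, no case analysis on the range of $j$) and a cleaner conceptual picture: the congruence is captured directly by the subgroup $t_1\Z + t_2\Z = M\Z$. What your version buys is that it reuses the machinery already built for Proposition~\ref{prop:GP}, and in the process proves the mildly interesting fact that $\Div_{t_1,t_2}$ parametrizes all multiples of $M$ below $t_1$; but this extra mileage is not needed for the rest of the argument. As a small remark, in your last paragraph you call the extension of Proposition~\ref{prop:GP} past $j < p$ ``really the only nontrivial step,'' which is true within your approach, but it is also an indication that you are doing more work than the statement requires -- the gcd observation sidesteps it entirely.
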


\begin{proof}
Let $\CongM_{t_1,t_2}(b,c,w_{1},w_{2})$ be the formula
$$b - c - t_1w_{1} - t_2w_{2} = 0.$$
  Since $\gcd(t_1,t_2) = M$, the condition $b \equiv c \, \mod{M}$ is expressed as:
  $$ \ex w_{1},w_{2} \quad \CongM_{t_1,t_2}(b,c,w_{1},w_{2}).$$
\end{proof}

\begin{lem} \label{lem:p} The constant $p$ is definable using just the multiplications by $t_{1}$ and $t_{2}$.
\end{lem}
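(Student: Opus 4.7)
The plan is to single out $p$ as the smallest positive integer $j$ for which $t_{1} \mid t_{2} j$. Divisibility by $t_{1}$ is expressible as $\ex r\,(t_2 j = t_1 r)$ using only multiplication by $t_1$ and $t_2$, and the minimality condition is a bounded universal over $j$, so this characterization will translate directly into a first-order formula in the required language.

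For the arithmetic content, I would verify that, using $t_1 = pM$ and $t_2 = M(pqM + 1)$ from \eqref{eq:new_params},
\[ t_1 \mid t_2 j \;\iff\; pM \mid Mj(pqM+1) \;\iff\; p \mid j(pqM+1). \]
Since $pqM + 1 \equiv 1 \pmod{p}$, one has $\gcd(p, pqM+1) = 1$, so the displayed equivalence simplifies to $t_1 \mid t_2 j \iff p \mid j$. Hence the smallest positive $j$ with $t_1 \mid t_2 j$ is exactly $j = p$.

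I would then write the defining formula
\[ \phi(v) \; := \; v > 0 \;\land\; \ex r\,(t_2 v = t_1 r) \;\land\; \for w\bigl[(0 < w < v) \to \neg \ex r'\,(t_2 w = t_1 r')\bigr], \]
which uses only $+$, $\le$, $0$, $1$, and multiplication by $t_1$ and $t_2$, and check that $\phi(v)$ holds exactly when $v = p$, by the characterization above. The only step that is at all substantive is the coprimality observation $\gcd(p, pqM+1) = 1$; everything else is a direct transcription into first-order logic over the language generated by $t_1$ and $t_2$.
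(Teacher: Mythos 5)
Your approach is essentially the same as the paper's: both identify $p$ as the smallest positive integer $v$ with $t_1 \mid t_2 v$ and encode this by a minimality formula. The paper justifies the characterization by observing $t_2/t_1 = qM + 1/p$, whereas you argue via $\gcd(p, pqM+1) = 1$; these are equivalent observations, and your resulting formula $\phi(v)$ is the same as the paper's $\Equalp_{t_1,t_2}(v,u)$ up to existentially quantifying the witness $r$ instead of leaving it free.
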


\begin{proof}
 Since $t_2 / t_1 = qM + 1/p$, $p$ is the smallest positive integer $v$ such that $t_1 | t_2 v$. Since $t_2 p / t_1 = t_2 / M = pqM + 1$, we can express that a pair of variables $u,v$ satisfy $(u,v) = (pqM+1,p)$ by the formula  
 \[ u > 0 \;\land\; t_2v = t_1u \; \land \; \for v',u' \;\; 0 < v' < v \to t_2v' \neq t_1u' \]
which we denote by $\Equalp_{t_{1},t_{2}}(v,u)$.  


  
\end{proof}

\begin{lem} \label{lem:tfloor}
  Suppose $p$, $q$, and $M$ are positive integers such that $p/q \notin \Z$. If $t_1 = pM$ and $t_2 = pqM^2+M$ then $\floor{t_1^2 / t_2} = \floor{p/q}$.
\end{lem}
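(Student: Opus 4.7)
The plan is to substitute the definitions of $t_1$ and $t_2$ directly into $t_1^2 / t_2$, obtain a clean fraction, and then sandwich it between $\lfloor p/q \rfloor$ and $\lfloor p/q \rfloor + 1$ using elementary algebra.

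First I would simplify:
\[ \frac{t_1^2}{t_2} \; = \; \frac{p^2 M^2}{pqM^2 + M} \; = \; \frac{p^2 M}{pqM + 1}. \]
Write $p = qa + r$ where $a = \lfloor p/q \rfloor$ and $0 \le r < q$; the hypothesis $p/q \notin \Z$ forces $r \ge 1$, and hence also $p \ge qa + 1 \ge a+1$ (using $q \ge 1$).

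Next I would verify the two bracketing inequalities separately by clearing denominators. For the lower bound $a \le p^2 M/(pqM+1)$, cross-multiplying (the denominator is positive) and simplifying gives
\[ p^2 M - a(pqM+1) \; = \; pM(p - aq) - a \; = \; pMr - a, \]
which is positive because $pMr \ge p \ge a+1 > a$. For the upper bound $p^2 M/(pqM+1) < a+1$, cross-multiplying and simplifying gives
\[ (a+1)(pqM+1) - p^2 M \; = \; pM(q - r) + (a+1), \]
which is positive because $q - r \ge 1$ (as $r < q$) and $a + 1 \ge 1$. Combining the two inequalities yields $a \le t_1^2/t_2 < a+1$, hence $\lfloor t_1^2/t_2 \rfloor = a = \lfloor p/q \rfloor$, as desired.

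There is no real obstacle here: the lemma is a one-line identity after the substitution, followed by routine estimates. The only thing to watch is the role of the hypothesis $p/q \notin \Z$, which is exactly what guarantees $r \ge 1$ in the lower bound (otherwise one would only get $pMr - a \ge -a$, which fails when $a > 0$).
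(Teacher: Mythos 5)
Your proof is correct and takes essentially the same approach as the paper: sandwich $t_1^2/t_2$ strictly below $\lfloor p/q\rfloor + 1$ and at least $\lfloor p/q\rfloor$, using $p/q\notin\Z$ only for the lower bound. The paper keeps the expression as $p/(q+1/pM)$ and compares directly, while you clear denominators and write $p=qa+r$, but the underlying estimates are the same.
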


\begin{proof}
  First, we have
  \[ t_1^2 / t_2 = p^2M^2 / (pqM^2 + M)  = p / (q + 1/pM) < p/q, \]
  so $\floor{t_1^2 / t_2} \leq \floor{p/q}$. On the other hand, since $p/q \notin \Z$ we have:
\[p \ge \floor{p/q}q + 1 > \floor{p/q}q + \floor{p/q}/pM = \floor{p/q}(q + 1/pM).\]
This means $t_{1}^{2}/t_{2} = p/(q + 1/pM) > \floor{p/q}$, and thus $\floor{t_{1}^{2}/t_{2}} = \floor{p/q}$.
\end{proof}


\begin{proofof}{Theorem \ref{thm:2PPA}} In order to apply Proposition \ref{prop:GP}, we must first multiply by $M$ every inequality in \eqref{eq:NP1} that involves multiplication by $p$ or $q$. This works because multiplications by $p$, $q$, and $M$ appear separately in \eqref{eq:NP1}. After doing so and clearing some denominators, we obtain the equivalent formula
\begin{align} 
 \Phi'_{p,q,M}(z) \;\; = \quad  & \ex y_1, y_2: \nonumber \\
  & \: \: \: 0 < z \leq p/q \label{eq1} \\ 
  \land & \: \: \: y_2 \equiv z \mod{pM} \label{eq2} \\ 
  \land & \: \: \: p/q < y_2 + 1 \leq p \label{eq3} \\
  \land & \: \: \: qM y_2 < pM y_1 \label{eq4} \\
  \land & \: \: \: \for x_1,x_2 \;\;\;  
\lnot
\left\{
\begin{matrix}
pMy_{1} - qMy_{2} \; \ge \; pMx_{1} - qMx_{2} \; \ge \;  0\\
y_{2} \;>\; x_{2} \;>\; 0
\end{matrix}
\right\}. \label{eq5}
\end{align}
\medskip

\noindent
Here~\eqref{eq3} is equivalent to $\floor{p/q} \le y_{2} < p$ in \eqref{eq:NP1} because $y_{2} \in \Z$.  Now consider the formula 

  \begin{align}
\Psi_{t_{1},t_{2}}(z) \;\; = \quad    & \ex y_1, y_2, w_1, w_2, u, v, r, s: \nonumber \\
    & \: \: \: 0 < t_2 z \leq t_1^2 \tag{\ref{eq1}'}\label{eq1'} \\ 
  \land & \: \: \: \CongM_{t_1,t_2}(y_2,z,w_1,w_2) \tag{\ref{eq2}'}\label{eq2'} \\ 
  \land & \: \: \:  \Equalp_{t_1,t_2}(u,v) \land \: t_1^2 < t_2(y_2+1) \leq t_{2}v \tag{\ref{eq3}'} \label{eq3'} \\
  \land & \: \: \:  \Div_{t_1,t_2}(y_2,r,s) \land \: r < t_1y_1 \tag{\ref{eq4}'}\label{eq4'} \\
  \land & \: \: \:  \for x_1, x_2 \; \; \; \big(0 < x_{2} < y_{2} \; \land \; \Div_{t_{1},t_{2}}(x_{2},r',s')\big) \to  \lnot \big(0 \le t_{1}x_{1} - r' \le t_{1}y_{1} - r\big).
  \tag{\ref{eq5}'}\label{eq5'}
\end{align}

  \smallskip
  
\noindent
It only remains to show that $\Phi'_{p,q,M}(z)$ and $\Psi_{t_1,t_2}(z)$ are equivalent. We have:

\smallskip

$(\ref{eq1}) \leftrightarrow (\ref{eq1'})$ This follows by rounding down both equations to the nearest integer and applying Lemma \ref{lem:tfloor}.

$(\ref{eq2}) \leftrightarrow (\ref{eq2'})$ This is Lemma \ref{lem:cong}.

$(\ref{eq3}) \leftrightarrow (\ref{eq3'})$ We can again apply Lemma \ref{lem:tfloor} to replace $p/q$ in (\ref{eq3}) by $t_1^2 / t_2$, since every other quantity in \ref{eq3} is an integer. By Lemma \ref{lem:p}, the formula $\Equalp_{t_{1},t_{2}}(v,u)$ fixes the value of $v$ to be $p$, so we can now replace $p$ by $v$ to obtain \ref{eq3'}.

$(\ref{eq3}) \rightarrow [(\ref{eq4}) \leftrightarrow (\ref{eq4'})]$  By (\ref{eq3}), we have $0 \leq y_2 < p$, so by Proposition \ref{prop:GP}, the condition $\Div_{t_1,t_2}(y_2,r,s)$ fixes the value of $r$ to be $qMy_2$. Here we modify (\ref{eq4}) by replacing $qMy_2$ by $r$ and $pM y_1$ by $t_1y_1$ to obtain (\ref{eq4'}).

$(\ref{eq4}) \rightarrow [(\ref{eq5}) \leftrightarrow (\ref{eq5'})]$ Using (\ref{eq4'}) which we have already shown to be equivalent to (\ref{eq4}), we can replace $qMy_2$ by $r$. Using the definition of $t_1$, we can also replace $pMy_1$ by $ty_1$ and $pMx_1$ by $t_1x_1$. So $(\ref{eq5})$ is equivalent to
 \[ \for x_1,x_2 \;\;\;  
\lnot
\left\{
\begin{matrix}
ty_{1} - r \; \ge \; t_1x_{1} - qMx_{2} \; \ge \;  0\\
y_{2} \;>\; x_{2} \;>\; 0
\end{matrix}
\right\} ,\]
or in another form
\[ \for x_1,x_2 \;\;\; 0 < x_2 < y_2 \rightarrow \lnot [ty_{1} - r \; \ge \; t_1x_{1} - qMx_{2} \; \ge \;  0]. \label{eq:5intermediate}\]
Since the hypothesis $x_2 < y_2$ along with $y_2 < p$ from (\ref{eq3}) implies $x_2 < p$, we can (by Proposition \ref{prop:GP}) insert the condition $\Div_{t_{1},t_{2}}(x_{2},r',s')$ into the hypothesis to fix $r'$ equal to $qMx_2$. Accordingly substituting in $r'$ for $qMx_2$, we obtain (\ref{eq5'}).

So $\Phi_{p,q,M}, \Phi'_{p,q,M}$ and $\Psi_{t_{1},t_{2}}$ are all equivalent. This finishes the proof of Theorem~\ref{thm:2PPA}.
\end{proofof}

\begin{proofof}{corollaries~\ref{cor:qe_hard} and~\ref{cor:positive_hard}}
The formula $\Psi'_{t_{1},t_{2}}(z) \coloneqq (0 < z \le t_{1}^{2}/t_{2}) \, \land \, \lnot \Psi_{t_{1},t_{2}}(z)$ is satisfied only by those $z \in [\mu,\nu] \backslash \bigcup_{i=1}^{n}\AP_{i}$ (see~\eqref{eq:equal_intersection}).
This formula defines a 2-parametric family $S_{t_{1},t_{2}}$.
So the condition $|S_{t_{1},t_{2}}| > 0$, which is equivalent to $\textsc{AP-COVER}$, cannot be expressed using polynomial-time relations in $t_{1}$ and $t_{2}$.
Similarly, any expansion of parametric PA with polynomial-time predicates cannot have full quantifier elimination. For otherwise we can apply it to the sentence $\ex z \; \Psi'_{t_{1},t_{2}}(z)$ and get an equivalent Boolean combination of polynomial-time relations in $t_{1},t_{2}$.
\end{proofof}

\bigskip

\section{Counting-universality of $2$-parametric Presburger formulas}

Consider a $k$-parametric PA formula:
\begin{equation}\label{eq:k-param}
\Phi_{\u}(\x) = Q_{1}y_{1} \; Q_{2}y_{2} \; \dots Q_{m}y_{m} \;\; \Theta_{\u}(\x, \y).
\end{equation}
Here $\u \in \Z^{k}$ are the $k$ scalar parameters, $\x \in \Z^{d}$ are the free variables, $\y = (y_{1},\dots,y_{m}) \in \Z^{m}$ are the quantified variables, $Q_{1}, \dots Q_{m} \in \{\for,\ex\}$ are the quantifiers, and $\Theta_{\u}(\x,\y)$ is a Boolean combination of linear inequalities in $\x,\y$ with coefficients and constants from $\Z[\u]$.
This formula defines a parametric family $S_{\u}$.

\begin{definition}\label{def:counting_reduce}
  We say that a $k_{1}$-parametric family $S_{\u}$ \emph{counting-reduces} to an $k_{2}$-parametric family $S'_{\t}$ if there exists $f = (f_{1},\dots,f_{k_{2}}) : \Z^{k_{1}} \to \Z^{k_{2}}$ with $f_{i} \in \Z[\u]$ such that for every $\u \in \Z^{k_{1}}$ we have:
  $$|S_{\u}| = \infty \; \Rightarrow \; |S'_{f(\u)}| = \infty \quad\text{and}\quad |S_{\u}| < \infty \Rightarrow |S_{\u}| = |S'_{f(\u)}|.$$
\end{definition}

\begin{thm}\label{th:universal}
Every $k$-parametric PA family $S_{\u}$ counting-reduces to another $2$-parametric PA family $F_{s,t}$ with the same number of alternations. In other words, $2$-parametric PA families are counting-universal.
\end{thm}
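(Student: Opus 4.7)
The plan is to encode the $k$ parameters $\u=(u_1,\ldots,u_k)$ into two new parameters $(s,t)\in\Z[\u]^2$ via a positional base-$s$ encoding, where $s=s(\u)\in\Z[\u]$ is a polynomial chosen large enough to dominate every coefficient value and every variable range appearing in the formula. Inside the translated family $F_{s,t}$ the relevant quantities are recovered by digit extraction in base $s$, exploiting the fact that if $z$ is a variable then $t\cdot z$ is itself a legitimate term in $2$-parametric PA.

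Concretely, first I would list the distinct polynomial coefficients $P_1(\u),\ldots,P_L(\u)\in\Z[\u]$ appearing in the atomic subformulas of $\Theta_\u$, and use the small-model property of PA --- satisfiable existentials admit witnesses of polynomial size in the parameters, and dually, universals may be restricted to polynomially bounded ranges --- to rewrite $\Phi_\u$ so that every quantifier is bounded by some polynomial in $\u$ and its outer variables. Choose $s(\u)$ so that the shifted coefficients $\hat P_j(\u):=P_j(\u)+s(\u)$ satisfy $0\le\hat P_j<2s$ and $|\hat P_j(\u)\,z|<s^2$ for every $z$ in the relevant bounded range, and set
\[ t(\u)=\sum_{j=1}^L \hat P_j(\u)\,s(\u)^{2j}. \]
In base $s$, the shifted coefficients $\hat P_j$ appear as width-$2$ blocks of $t$; multiplying $t$ by any bounded variable $z$ yields $tz$, in which each product $\hat P_j(\u)\,z$ still occupies its own width-$2$ block without overflow.

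For each atomic subformula containing a term $P_j(\u)\,z$, introduce auxiliary variables $\alpha,\beta,W$ subject to the quantifier-free conditions
\[ tz=\alpha\,s^{2j+2}+W\,s^{2j}+\beta, \qquad 0\le\beta<s^{2j}, \qquad 0\le W<s^{2}, \]
which uniquely pin $W=\hat P_j(\u)\,z$; then replace $P_j(\u)\,z$ in the atomic formula by the PA term $W-s\,z$. Constants $P_j(\u)$ not multiplied by any variable are decoded analogously with $z=1$. To preserve the alternation structure, I would place the auxiliary quantifiers in the same block as the innermost variable on which $W$ depends: in an existential block we use the form $\exists\alpha,\beta,W\,(D\wedge\cdots)$, while in a universal block we use the dual form $\forall\alpha,\beta,W\,(D\to\cdots)$, equivalent because the decoding relation $D$ determines $W$ uniquely. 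Disjunctions inside universal blocks collapse into a single universal block by the same uniqueness argument, and in the quantifier-free case $m=0$, the auxiliaries are simply added as new free variables, which leaves the count unchanged since they are uniquely determined by $(s,t,\x)$.

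The hard part will be the preliminary bounding step: making the base-$s$ encoding work requires every variable whose product with some $P_j(\u)$ appears to lie in a polynomially bounded range. For $|S_\u|<\infty$ this follows from the defining inequalities of $\Phi_\u$ together with the small-model property; for $|S_\u|=\infty$ one has to adjoin a separate gadget to $F_{s,t}$ that produces infinite cardinality exactly when $\Phi_\u$ does. The signs of the variables must also be absorbed into the encoding --- for example by further shifting every variable $z$ by a polynomial quantity so that the shifted value is non-negative --- but this only enlarges the block widths by a constant and does not alter the alternation structure.
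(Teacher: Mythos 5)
Your proposal follows essentially the same route as the paper: first use a quantifier-elimination-based bounding lemma to restrict all variables to polynomially bounded ranges (the paper's Lemma~\ref{lem:bounded}, which you correctly identify as the hard preliminary step), then pack all the polynomial coefficients into a single parameter via positional encoding and recover the products by digit extraction in the translated formula, placing the decoding quantifiers in the existing innermost block (existential form in $\exists$-blocks, dual $\forall\ldots(D\to\cdots)$ form in $\forall$-blocks, exactly as the paper's $(\star)$ and $(\star\star)$) to preserve the alternation depth, with the quantifier-free case handled by adding the decoded digits as new free variables. The only differences are cosmetic: you use $s$ as the base and $t$ as the packed parameter with nonnegativity shifts and width-$2$ digit blocks, whereas the paper uses $t=\eta(\u)$ as the base and $s=\sum_j\delta_j(\u)t^j$ with signed digits bounded by $\pm t/2$.
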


First we prove the following lemma.

\begin{lem}\label{lem:bounded}
For every formula $\Phi_{\u}$ of the form~\eqref{eq:k-param}, 
  there exist $\mu,\mu',\nu_{1},\dots,\nu_{m} \in \Z[\u]$
such that for every value $\u \in \Z^{k}$ we have:
\begin{itemize}
\item[i)] $|S_{\u}| = \infty$ if and only if:
  $$
\ex\ts\x \;\; \bigg[\mu(\u) \le \norm{\x}_{\infty} \le \mu'(\u) \; \land \; Q_{1} \big(|y_{1}| \le \nu_{1}(\u)\big)  \; \dots \; Q_{m}\big( |y_{m}| \le \nu_{m}(\u)\big) \;\; \Theta_{\u}(\x, \y) \bigg]
  $$
\item[ii)] If $|S_{\u}| < \infty$ then for every $\x \in \Z^{d}$:
$$
S_{\u}(\x) = \textup{true} \iff \norm{\x}_{\infty} \le \mu(\u) \; \land \; Q_{1} \big( |y_{1}| \le \nu_{1}(\u) \big)  \; \dots \; Q_{m} \big( |y_{m}| \le \nu_{m}(\u) \big) \;\; \Theta_{\u}(\x, \y).
$$
\end{itemize}
Here $\norm{\cdot}_{\infty}$ is the $\ell_{\infty}$--\. norm.
So $\mu(\u) \le \norm{\x}_{\infty}$ stands for $\bigvee_{i=1}^d \big(x_{i} \le -\mu(\u)  \; \lor \;  \mu(\u) \le x_i \big)$ and  $\norm{\x}_{\infty} \le \mu'(\u)$ stands for $\bigwedge_{i=1}^d \big( -\mu'(\u) \le x_{i} \le \mu'(\u) \big)$.
Each restricted quantifier $Q_{i}\big( |y_{i}| \le \nu_{i}(\u) \big)$  means exits/for all $\,y_{i}$ in the interval $[-\nu_{i}(\u),\nu_{i}(\u)]$.\footnote{Here we understand that $\mu,\mu',\nu_{i}$ have positive values for all $\u \in \Z^{k}$.}
\end{lem}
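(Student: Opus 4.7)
The plan is to combine classical polynomial bounds from non-parametric Presburger arithmetic with the observation that, for a fixed parametric formula $\Phi_\u$, all coefficients are fixed polynomials in $\u$, so any bound phrased in terms of those coefficients automatically becomes a polynomial in $\u$. Two things need to be produced: bounds $\nu_i(\u)$ on each quantified $y_i$ so that the restricted quantifier has the same truth value as the unrestricted one, and bounds $\mu(\u) \le \mu'(\u)$ separating the finite-cardinality case from the infinite-cardinality case of $S_\u$.

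For the quantifier bounds, I would work from the innermost quantifier outward. The standard polyhedral witness estimate, implicit in Cooper's quantifier elimination procedure and developed explicitly in \cite{NP} and its references, says that for a classical PA formula $\ex y\,\Theta(\x,\y',y)$ with integer coefficients of absolute value at most $H$, any existential witness $y$ may be replaced by one with $|y| \le \textup{poly}(H)$. The universal case follows by negating $\Theta$: a counterexample to $\for y\, \Theta$, if any, exists with $|y| \le \textup{poly}(H)$, and hence $\for y\, \Theta$ is equivalent to the bounded version $\for(|y|\le\textup{poly}(H))\,\Theta$. Applying this in order $i = m, m-1, \ldots, 1$, absorbing each newly bounded quantifier into the coefficient data before handling the next, yields $\nu_1,\ldots,\nu_m \in \Z[\u]$, since every $H$ is a fixed polynomial in $\u$.

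For the free-variable bounds, I would analyze the semilinear structure of $S_\u$. Projecting out the quantified variables (using standard parametric quantifier elimination in the expanded PA language with polynomial divisibility predicates) presents $S_\u$ as a finite union of sets of the form $P_j \cap \Z^d$, where each $P_j$ is a rational polyhedron whose defining inequalities have coefficients in $\Z[\u]$. Cramer-type estimates give a polynomial $\mu_0(\u) \in \Z[\u]$ bounding the $\ell_\infty$-norms of both the vertices of each $P_j$ and the primitive generators of each recession cone. Setting $\mu(\u) = \mu_0(\u)+1$ ensures that whenever $S_\u$ is finite every constituent $P_j$ is bounded and thus $S_\u \subseteq [-\mu(\u),\mu(\u)]^d$, yielding part (ii). When $|S_\u|=\infty$, some $P_j$ is unbounded; pushing a bounded vertex of $P_j$ along the smallest multiple of a primitive ray generator that exits $[-\mu(\u),\mu(\u)]^d$ produces a lattice point $\x\in S_\u$ with $\mu(\u)\le\norm{\x}_\infty\le\mu'(\u)$ for a suitable $\mu'(\u)\in\Z[\u]$, giving the forward direction of (i). The converse of (i) follows because such an $\x$ lies outside the finite-case box and must therefore come from an unbounded $P_j$, forcing infinitely many lattice points.

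The main obstacle I anticipate is keeping the bounds uniform in $\u$: the projection and quantifier-elimination steps naturally split into cases based on signs, residues, or polynomial inequalities in $\u$, each producing its own polynomial bound. Since the formula $\Phi$ is \emph{fixed}, however, the number of such cases is bounded, so setting $\mu,\mu',\nu_i$ to be the coordinate-wise maximum over all cases of the resulting polynomial bounds yields elements of $\Z[\u]$ that work uniformly. Combining this uniformization with the quantifier bounds and the polyhedral analysis then completes the proof.
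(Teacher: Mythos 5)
Your overall strategy matches the paper's at a high level: use classical quantifier elimination and integer programming bounds, observe that a \emph{fixed} formula has a fixed number of coefficients, and note that those coefficients (hence all derived bounds) are polynomials in $\u$. The treatment of the free variables via polyhedral decomposition, vertex/ray bounds, and pushing along a recession ray to land in the annulus $\mu \le \norm{\x}_\infty \le \mu'$ is essentially what the paper does. But the first half of your plan has a genuine gap in the order of operations.

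You propose to bound the quantified variables first, from innermost outward, claiming a witness estimate of the form: for $\ex y\,\Theta(\x,\y',y)$ with coefficients of magnitude $\le H$, any existential witness can be taken with $|y|\le\mathrm{poly}(H)$. This is false as stated. Consider $\Theta(x,y) = (y = x)$: the formula $\ex y\,\Theta$ holds for every $x$, but the only witness is $y = x$, which is unbounded as $x$ ranges over $\Z$. Any correct witness bound must depend on the magnitudes of the outer free variables $\x,\y'$, not just on $H$. Consequently you cannot produce $\nu_m\in\Z[\u]$ (a bound depending only on $\u$) until $\x$ and $y_1,\dots,y_{m-1}$ have already been confined to a box whose size is polynomial in $\u$. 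Your plan ``absorbs each newly bounded quantifier into the coefficient data,'' but the unbounded outer variables are not coefficients and cannot be absorbed. The paper avoids this by reversing the order: it first runs full quantifier elimination to obtain a quantifier-free formula in $\x$ alone, uses that (together with the integer-programming vertex/ray bounds) to establish the bound $\mu(\u)$ on $\x$ and the annulus $[\mu(\u),\mu'(\u)]$ characterizing infinitude, and only \emph{then} works inward from $y_1$ to $y_m$, bounding each $y_i$ in terms of $\u$ given that $\x$ and $y_1,\dots,y_{i-1}$ already lie in boxes of polynomial size. If you reorder your argument accordingly --- bound $\x$ first via the polyhedral decomposition of the projection, then bound the $y_i$'s conditional on those bounds --- the rest of your plan goes through.
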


\begin{proof}
Consider a usual,  non-parametric PA formula:
$$
\Phi(\x) \; = \; Q_{1}y_{1} \; Q_{2}y_{2} \; \dots Q_{m}y_{m} \;\; \Theta(\x, \y),\quad \x \in \Z^{n},
$$
which defines some set $S \subseteq \Z^{n}$.
Recall Cooper's quantifier elimination procedure for Presburger arithmetic (see~\cite{Oppen}).
Applying it to $\Phi(\x)$, we obtain an \emph{equivalent} quantifier free formula $\Phi'(\x)$, which may contain some extra divisibility predicates.
By Theorem~2 of~\cite{Oppen}, after eliminating all $m$ quantifiers from $\Phi$, we obtain the following bounds:
$$
c' \;\le\; c^{4^{m}}, \quad s' \;\le\; s^{(4c)^{4^{m}}}, \quad a' \;\le\; a^{4^{m}} s^{(4c)^{4^{m}}},
$$
where:
\begin{itemize}
\item $c$ is the number of distinct integers that appeared as coefficients or divisors in $\Phi$,
\item $s$ is the largest absolute value of all integers that appeared in $\Phi$ (coefficients + divisors + constants),
\item $a$ is the total number of atomic formulas in $\Phi$ (inequalities + divisibilities),
\end{itemize}
and $c',s',a'$ are the corresponding quantities for $\Phi'$.
Now assume $c,m$ and $n$ are fixed.
Then we have:
$$
c'  \;\le\; \const, \quad s' \;\le\; s^{\const}, \quad a'  \;\le\; a^{\const} s^{\const},
$$
where $\const = \const(c,m)$ is fixed.
So in this case  $\Phi'$ has at most a fixed  number of coefficients and divisors.

Denote by $D$ the common multiple of all divisors in $\Phi'$.
We have $D \le s^{\const}$.
Let $\L = \langle D e_{1}, \dots, D e_{n} \rangle$ be the lattice of $\Z^{n}$ consisting of $\x \in \Z^{n}$ whose coordinates are all divisible by $D$.
Fix some particular coset  $\C$ of $\L$ and restrict $\x$ to $\C$.
Then in $\Phi'(\x)$, all divisor predicates have fixed values (either true or false) as $\x$ varies over $\C$.
So over $\C$, the formula $\Phi'(\x)$ is just a Boolean combination of linear inequalities in $\x$, which represents a \emph{disjoint union} of some rational polyhedra in $\R^{n}$.
Each such polyhedron $P$ can be described by a system of \emph{fixed} length, because there are only at most $c'$ different coefficients for the $\x$ variables.
The integers in the system are also bounded by $s^{\const}$.
We consider $P \cap \C$. By the fundamental theorem of Integer Programming\footnote{We are rescaling $\L$ to $\Z$ before applying this bound.} (see~\cite[Th.~16.4 and Th.~7.1]{Schrijver}), we have:
$$P \cap \C \;=\; \conv(\ov v_{1},\dots,\ov v_{p}) \,+\, \Z_{+}\langle \ov w_{1},\dots,\ov w_{q} \rangle$$
for some $\ov v_{i}, \ov w_{j} \in \Z^{n}$ with  $\norm{\ov v_{i}}_{\infty}, \norm{\ov w_{j}}_{\infty} < s^{\const'}$.
Here $\const' = \const'(c,m,n)$ is fixed.
From this, it is easy to see that there is $\const'' = \const''(c,m,n)$ such that for every polyhedron $P$ in the disjoint union, we have:
$$
\gathered
|P \cap \C| = \infty \quad \iff \quad \text{there is } \x \in P \cap \C \;\;\text{with}\;\; s^{\const''} < \norm{\x}_{\infty} < s^{2\const''},
\\
|P \cap \C| < \infty \quad \Longrightarrow \quad P \cap \C  \,\subseteq\, [-s^{\const''}, s^{\const''}]^{n}.
\endgathered
$$

Since this holds for every coset $\C$ of $\L$, we conclude that there is $\const_{0} = \const_{0}(c,m,n)$ such that:
\begin{gather}
|S|  = \infty \quad \iff \quad \ex\, \x \;\;\text{with}\;\; s^{\const_{0}} < \norm{\x}_{\infty} < s^{2\const_{0}} \;\text{ and }\; \Phi'(\x)=\text{true} \label{eq:case1}
\\
|S|  < \infty \quad \Longrightarrow \quad \for\,\x  \;\; \left( \Phi'(\x) = \text{true} \;\to\;  \norm{\x}_{\infty} \le s^{\const_{0}} \right). \label{eq:case2}
\end{gather}

This gives us a bound for $\x$.
Now for every $\x$ with $\norm{\x}_{\infty} \le s^{\const_{0}}$,
by the same argument, it is enough to decide the (substituted) sentence $\Phi(\x)$ over those $y_{1}$ with $|y_{1}| \le s^{\const_{1}}$.
In other words, for every such value for $\x$, we may replace $Q_{1}y_{1}$ by $Q_{1}\big(|y_{1}| \le s^{\const_{1}} \big)$ in $\Phi(\x)$ to obtain a new formula $\Phi_1(\x)$, which is equivalent to the original formula $\Phi(\x)$.
Working inwards, we can likewise bound $|y_{2}|$ by $s^{\const_{2}}$, $|y_{3}|$ by $s^{\const_{3}}$, etc.
Therefore, in case $|S| < \infty$, the whole formula $\Phi$ is equivalent to one with bounded quantifiers on all $y_{i}$.
Also by~\eqref{eq:case1}, we have $|S| = \infty$ if and only if some $ s^{\const_{0}} < \norm{\x}_{\infty} < s^{2\const_{0}}$ satisfies it.
For $\x$ in this range, we can again bound $y_{1} ,y_{2},$ etc., accordingly by some other powers of $s$.
Note that we can bound each $y_{i}$ by a common larger power of $s$ for both cases~\eqref{eq:case1} and~\eqref{eq:case2}.

In a $k$-parametric PA formula $\Phi_{\u}(\x)$, we consider $m,n$ and $c$ to be fixed.
Since all coefficients and constants of $\Phi_{\u}$ are in $\Z[\u]$, we can bound $s$ by some polynomial in $\u$.
Thus, every $s^{\const}$ is also bounded by some polynomial in $\u$.
This proves Lemma~\ref{lem:bounded}.
\end{proof}

\begin{rem}
  In the above application of Cooper's elimination, if only $m,n$ are fixed but not $c$, then we no longer have the bound $s' \le s^{\const}$.
  Instead, we would have $c',\log s' \le \text{poly}(c, \log s)$.
  A bound of this type is important for showing that the decision problem for classical PA with a bounded number of variables falls within the Polynomial Hierarchy (see e.g.~\cite{Gradel}).
However, it would not be strong enough for our argument, which crucially needs $\log s' = O(\log s)$.
\end{rem}

From Lemma \ref{lem:bounded}, it is easy to see that $S_{\u}$ counting-reduces to the family $\wt S_{\u}$ defined by the following formula $\wt \Phi_\u(\x, \wt x)$:
$$
\aligned
\wt \Phi_{\u}(\x,\wt x) \;\; = \quad &\Big[ \wt x \ge 0 \; \land \;  Q_{1} \big( |y_{1}| \le \nu_{1}(\u) \big)  \; \dots \; Q_{m}  \big( |y_{m}| \le \nu_{m}(\u) \big) \;\; \mu(\u) \le \norm{\x}_{\infty} \le \mu'(\u) \; \land \; \Theta_{\u}(\x, \y) \Big] \; \lor  \\
 &\Big[ \wt x = 0 \; \land \;  Q_{1} \big( |y_{1}| \le \nu_{1}(\u) \big) \; \dots \; Q_{m} \big( |y_{m}| \le \nu_{m}(\u) \big) \;\; \norm{\x}_{\infty} \le \mu(\u) \; \land \;  \Theta_{\u}(\x, \y) \Big].
\endaligned
$$
Here the bounds on $\norm{\x}_{\infty}$ are moved to after the quantifiers on $y_{i}$ without changing the meaning.
The dummy variable $\wt x$ is used to make sure that $|\wt S_{\u}| = \infty$ in the first case.

\begin{proof}[Proof of Theorem~\ref{th:universal}]
  We show that $\wt S_{\u}$ counting-reduces to a $2$-parameter family $F_{s,t}$, defined by a new formula $\Psi_{s,t}$.
  First, we list all the different scalar terms that appear in $\wt \Phi_{\u}$, either as coefficients or constants (including all $\mu,\mu',\nu_{i}$), as $\del_{0}(\u),\dots,\del_{r}(\u)$.
  Now suppose we need to multiply some $z \in \N$ by $\del_{0}(\u),\dots,\del_{r}(\u)$ and also know that
\begin{equation}\label{eq:t_bound}
  -t/2 < \del_{0}(\u)\ts z,\,\dots,\,\del_{r}(\u)\ts z < t/2
\end{equation}
for some $t \in \Z$.
The following base-$t$ concatenation, which is similar to~\eqref{eq:new_params}, can be used.
Essentially, we encode the ``multi''-product $(\del_{0}(\u)\ts z, \dots, \del_{r}(\u)\ts z)$ as a single product:
\begin{equation*}
\del_{0}(\u)\ts z \; + \; t\, \del_{1}(\u)\ts z \; + \; \dots \; + t^{r}\, \del_{r}(\u)\ts z \; = \; (\del_{0}(\u) + t\, \del_{1}(\u) + \dots + t^{r} \del_{r}(\u))\, z.
\end{equation*}
In other words, if $s = \del_{0}(\u) + t\, \del_{1}(\u) + \dots + t^{r} \del_{r}(\u)$ and:
\begin{equation}\label{eq:ts_div}\tag{$\Div_{s,t}(z,z_{0},\dots,z_{r})$}
s\,z \;=\; z_{0} + t\,z_{1} + \dots + t^{r}z_{r} \;\; \land \;\;  t/2 < z_{0},\dots,z_{r} < -t/2,
\end{equation}
then we must have $z_{0} = \del_{0}(\u)\ts z,\, \dots,\, z_{r} = \del_{r}(\u)\ts z$.
Indeed, by subtracting we get $z_{0} - \del_{0}(\u)\ts z \equiv 0 \mod t$, which implies $z_{0} = \del_{0}(\u)\ts z$ because $-t/2 < z_{0},\,\del_{0}(\u)\ts z < t/2$.
The same argument applies to other $z_{i}$.

\medskip

Observe that in $\wt \Phi_{\u}$, all variables $\x$ and $\y$ are bounded by polynomials in $\u$.
Hence, we can pick $\eta(\u) \in \Z[\u]$ so that for every value $\u \in \Z^{k}$, the condition~\eqref{eq:t_bound} is always satisfied when $t = \eta(\u)$ and $z$ is either the constant $1$ or any of the possible values of the $\x,\y$ variables.
Our reduction map $f : \Z^{k} \to \Z^{2}$ can now be defined by letting 
\begin{equation*}
t = \eta(\u); \hspace{0.2in} s = \del_{0}(\u) + t\, \del_{1}(\u) + \dots + t^{r} \del_{r}(\u).
\end{equation*}

\medskip

Now we can define $\Psi_{s,t}(\x,\wt x)$ from $\wt \Phi_{\u}(\x,\wt x)$.
We need $(m+d+1)(r+1)$ extra variables:
$$
\w = (w_{ij})_{1 \le i \le d,\, 0 \le j  \le r},\; \w' = (w'_{ij})_{1 \le i \le m,\, 0 \le j \le r} \quad \text{and} \quad \v = (v_{j})_{0 \le j \le r}.
$$
Assuming the last quantifier $Q_{m}$ in $\wt \Phi_{\u}$ is $\ex$, we insert 
$$
(\star) \qquad  \ex\, \w, \w', \v \; \bigwedge_{i=1}^{d} \Div_{s,t}(x_{i},w_{i0},\dots,w_{ir}) \; \land \; 
\bigwedge_{i=1}^{m} \Div_{s,t}(y_{i},w'_{i0},\dots,w'_{ir}) \; \land \;
\Div_{s,t}(1,v_{0},\dots,v_{r})
$$
right before $\Theta_{\u}(\x,\y)$, i.e., replace $\Theta_{\u}(\x,\y)$ by $(\star) \land \Theta_{\u}(\x,\y)$.
Then in $\wt \Phi_{\u}$ we replace every term $\del_{j}(\u)\, x_{i}$ by $w_{ij}$, every term $\del_{j}(\u)\ts y_{i}$ by $w'_{ij}$ and every term $\del_{j}(\u)$ by $v_{j}$.
Now $\wt \Phi_{\u}$ becomes the desired $\Psi_{s,t}$.
In case $Q_{m} = \for$, we insert:
$$
(\star\star) \qquad  \for\, \w, \w', \v \; \bigvee_{i=1}^{d} \lnot\Div_{s,t}(x_{i},w_{i0},\dots,w_{ir}) \; \lor \; 
\bigvee_{i=1}^{m} \lnot\Div_{s,t}(y_{i},w'_{i0},\dots,w'_{ir}) \; \lor \;
\lnot\Div_{s,t}(1,v_{0},\dots,v_{r})
$$
right before $\Theta_{\u}(\x,\y)$, i.e., replace $\Theta_{\u}(\x,\y)$ by $(\star\star)\lor\Theta_{\u}(\x,\y)$.
Again, replace every term $\del_{j}(\u)\, x_{i}$ by $w_{ij}$, every term $\del_{j}(\u)\ts y_{i}$ by $w'_{ij}$ and every term $\del_{j}(\u)$ by $v_{j}$.
This gives $\Psi_{s,t}$.

Note that $\Psi_{s,t}$ still has the form $\big[\dots\big] \lor \big[ \dots \big]$ with each disjunct containing $m$ alternations $Q_{1} \dots Q_{m}$.
This formula is equivalent to a formula in prenex normal form with $m$ quantifier alternations, so we are done.
\end{proof}

\begin{rem}
  In case $S_{\u}$ is defined by a quantifier-free formula, i.e., $m=0$, we only need to insert $(\star)$, without the $\ex$ quantifiers, before  $\Theta_{\u}(\x,\y)$.
  This is because $\Div_{s,t}(z,z_{0},\dots,z_{r})$ uniquely determines $z_{0},\dots,z_{r}$ in $z$.
  So in this case $S_{\u}$ also counting-reduces to a quantifier-free $F_{s,t}$, although the latter has many more free variables.
Thus, the study of integer point counting functions on $k$-parametric polyhedra reduces to the case of $2$-parametric polyhedra in higher dimensions.
\end{rem}

\bigskip

\section{Counting points in parametric unordered Presburger families in polynomial time}

In this section, we consider the reduct of multi-parametric Presburger arithmetic to the language without ordering, so that basic quantifier-free formulas are equivalent to Boolean combinations of equations of the form $f_1(\t) x_1 + \ldots + f_n(\t) = g(\t)$, where $\t = (t_1, \ldots, t_k)$ is a tuple of parameters and $f_1, \ldots, f_m, g \in \Z[\t]$. As always, we are allowed to quantify over the variables $x_i$ but not over the parameters $\t$. Note that if there is no parameter $\t$, this would correspond to studying the first-order logic of the additive group $(\Z; +)$.
More precisely:

\begin{definition}
\label{def:unordered}
 A \emph{$k$-parametric unordered PA family} is a collection $$\{S_\bft : \bft = (t_1, \dots, t_k) \in \Z^k \}$$ of subsets of $\Z^d$ which can be defined by an equation of the form 
 \begin{equation}\label{eq:unordered_Presburger}
S_\bft = \{ \x \in \Z^d \;\; : \; \;Q_{1}y_{1} \; Q_{2}y_{2} \; \dots Q_{m}y_{m} \;\; \Theta_\bft(\x, \y) \},
\end{equation}
where the $Q_i \in \{\forall, \exists\}$ are quantifiers for variables $y_i$ ranging over $\Z$ and $\Theta_\bft(\x, \y)$ is a Boolean combination of linear equations with coefficients in $\Z[\bft]$.

\end{definition}


For example, $$\left(x_1 = 0\right) \wedge \exists x_2 \exists x_3 \left( x_2 t_1 + x_3 t_2 = 1 \right)$$ defines a $2$-parametric unordered PA family $\{S_\t \subseteq \Z : \t \in \Z^2\}$ such that $S_\t = \{0\}$ if $\gcd(t_1, t_2) = 1$ and $S_\t = \varnothing$ otherwise.

\begin{thm}
\label{th:unordered}
Suppose that $S_{\t} \subseteq \Z_d$ is a $k$-parametric unordered PA family. Then we have:

\begin{enumerate}
\item There is a polynomial-time algorithm to decide whether $S_\t$ is nonempty.
\item There is a polynomial-time algorithm on input $\t$ which decides whether or not $S_\t$ is finite or infinite.
\item There is a polynomial-time evaluable function $g: \Z^k \rightarrow \N$ such that whenever $S_\t$ is finite, $g(\t) = |S_\t|$.
\end{enumerate}
\end{thm}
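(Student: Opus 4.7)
The plan is to exploit the quantifier elimination for unordered Presburger arithmetic due to van den Dries and Holly \cite{vdDH}: in the language of abelian groups augmented by congruence predicates $\cdot \equiv 0 \mod{n}$ (one for each positive integer $n$), every formula in $(\Z; +, 0, 1)$ is equivalent to a quantifier-free one. The first step is to apply this \emph{uniformly} in the parameters, obtaining a quantifier-free formula $\Psi_\t(\x)$ of fixed shape whose atomic subformulas are linear equations and congruences with $\Z[\t]$-coefficients, of the forms
\[ a_1(\t)x_1 + \ldots + a_d(\t)x_d = b(\t) \quad \text{and} \quad a_1(\t)x_1 + \ldots + a_d(\t)x_d \equiv b(\t) \mod{c(\t)}, \]
with $a_i, b, c \in \Z[\t]$. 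Since the defining formula $\Phi_\t$ is itself fixed, only the polynomial coefficients depend on $\t$: the overall Boolean structure of $\Psi_\t$, the number of atoms, and their degrees are all bounded by constants independent of $\t$.

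Next, I would put $\Psi_\t$ into disjunctive normal form and observe that each disjunct defines, for each concrete value of $\t$, either the empty set or an affine coset $\x_0 + \Lambda$ of a sublattice $\Lambda \leq \Z^d$. Given specific integer coefficients, the coset and lattice are extracted from the Smith normal form of the integer matrix built from the evaluated coefficients: consistency, a particular solution $\x_0$, and a basis for $\Lambda$ are all standard polynomial-time by-products of Smith normal form. Since $d$ and the number of atoms are fixed while the evaluated entries have bit-size polynomial in the input size of $\t$, all such computations run in polynomial time. Parts (1)--(3) then reduce to routine book-keeping: nonemptiness of $S_\t$ is equivalent to consistency of some fixed disjunct's system under the substitution $\t$; finiteness holds exactly when every nonempty coset produced by a disjunct has trivial sublattice part $\Lambda = \{0\}$; and in the finite case, inclusion-exclusion over the (fixed) collection of disjuncts (and intersections thereof) expresses $|S_\t|$ as a bounded sum of $0$'s and $1$'s, computed by polynomially many Smith normal form checks. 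This also makes precise the paper's claim that $|S_\t|$ is expressible using gcd-type functions, since Smith normal form on matrices of bounded size reduces to iterated gcd computations on the polynomial coefficients.

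The main obstacle is establishing Step 1 uniformly: ensuring that the elimination of \cite{vdDH} produces atoms whose moduli $c(\t)$ and coefficients are polynomials in $\t$ rather than more exotic functions. A Cooper-style elimination of a single quantifier $\exists y$ from a conjunction of equations and congruences normally branches on which coefficient of $y$ is nonzero and on gcd conditions among coefficients; in the parametric setting these case splits become conditions on the polynomial coefficients. My approach is to branch on the finitely many possible vanishing patterns of the $a_i(\t)$ and the divisibility relations among them, treating each branch as a separate piece of the Boolean structure of $\Psi_\t$. Once this uniform quantifier elimination is in place, the finiteness test and the counting formula all follow from bounded-dimensional lattice arithmetic, and the polynomial-time bound is immediate.
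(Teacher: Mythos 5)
Your overall framework — apply a uniform quantifier elimination for unordered PA, pass to disjunctive normal form, and reduce to bounded-size lattice arithmetic expressible via gcd-type operations — matches the paper's strategy. But there is a genuine gap at the central step. You claim that ``each disjunct defines, for each concrete value of $\t$, either the empty set or an affine coset $\x_0 + \Lambda$ of a sublattice $\Lambda \leq \Z^d$.'' This is false: a disjunct of a DNF is a conjunction of \emph{literals}, and these include negated atoms. A negated equation like $x_1 \neq 0$ or a negated divisibility like $\neg(2 \mid x_1)$ does not define a coset, and neither does its conjunction with positive atoms. For example, $x_1 = 0 \wedge x_2 \neq 0$ in $\Z^2$ defines $\{0\} \times (\Z \setminus \{0\})$, which is not a coset of any sublattice. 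Your subsequent arguments — Smith normal form to extract ``the coset and lattice,'' finiteness as ``trivial sublattice part,'' and inclusion-exclusion over cosets — all rest on this false structural claim. Negated congruences $\neg(a(\t) \mid s(\x,\t))$ cannot be rewritten as a \emph{fixed} disjunction of positive atoms either, since the number of residues depends on the modulus $a(\t)$, which varies with $\t$; and negated equations cannot be so rewritten at all.

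The paper replaces your coset claim with a weaker but correct one: each disjunct defines a set of cardinality exactly $0$, $1$, or $\infty$, regardless of the negated literals present. The proof is a geometric argument — if a disjunct has two distinct solutions $\overline{x}_1, \overline{x}_2$, then walking along the line through them produces infinitely many further solutions, because the positive equation constraints are affine (so closed under the line), the positive and negated divisibility constraints are periodic along the line (and satisfiable on an infinite periodic set since they hold at $\overline{x}_1$), and each negated equation excludes at most one point of the line. Once you have this trichotomy, the rest is as you envision: nonemptiness of each disjunct, and the condition that it has $\geq 2$ solutions, are themselves expressible as quantifier-free conditions on $\t$ (via the van den Dries--Holly elimination applied to $\exists \overline{x}\, \theta_i$ and $\exists \overline{x}_1 \exists \overline{x}_2 (\overline{x}_1 \neq \overline{x}_2 \wedge \cdots)$), and the exact finite count is determined by finitely many such conditions comparing the unique solutions of the finite disjuncts. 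A secondary remark: your ``main obstacle'' — uniformity of the quantifier elimination — is handled in the paper by quoting van den Dries and Holly's theorem directly, which gives elimination in a two-sorted language with gcd-like function symbols $g, \alpha, \beta, \gamma$; this is cleaner than branching on vanishing patterns of the polynomial coefficients, which would need care to keep the number of branches bounded when gcd conditions among the $a_i(\t)$ arise.
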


In fact, the proof of Theorem~\ref{th:unordered} will show that the decision algorithms for (1) and (2) rely upon only a few basic, concrete number-theoretic operations on $\t$, such as $\gcd$ and a couple of related functions.

To prove Theorem~\ref{th:unordered}, we need to recall some notation from \cite{vdDH}. To eliminate quantifiers, they work in a two-sorted language $L_2$ in which variables $x_i$ and parameters in $\t$ are assigned to objects of distinct domains, called the \emph{group sort} and the \emph{ring sort}, respectively. \textbf{For our purposes, the group sort and the ring sort are two disjoint copies of $\Z$.} The variables $x_i$ and $y_i$ will always range over values in the group sort, and the parameters $t_i$ will always range over values in the scalar sort. In other words, we can think of the parameters $t_1, \ldots, t_k$ as ``typed variables'' ranging over a domain of possible parameter values in the scalar sort (a copy of $\Z$), and $x_1, x_2, \ldots$ as variables of a distinct type ranging over values in the group sort (which is a different copy of $\Z$), and the parameters $t_i$ act upon the group sort by scalar multiplication.

The language $L_2$ consists of the following nonlogical symbols (in addition to equality):

\begin{itemize}
\item Within the scalar sort, constant symbols for $0$ and $1$, a unary operation $-$ for negation, ring operations $+$ and $\cdot$, and four additional binary operations $g, \alpha, \beta,$ and $\gamma$ (whose interpretation is explained below);
\item Within the group sort, a constant symbol for $0$, a unary operation $-$ for negation, and a symbol $+$ for addition;
\item A binary operation $\cdot$ such that $s \cdot x$ is a value in the group sort whenever $s$ is a value in the scalar sort and $x$ is a value in the group sort, denoting multiplication by $s$ in the usual sense; and
\item A binary relation symbol $|$ to be interpreted such that whenever $s$ is in the scalar sort and $x$ is in the group sort, $$s | x \Leftrightarrow \exists y \left( s \cdot y = x \right).$$
\end{itemize}
The binary operations $g, \alpha, \beta,$ and $\gamma$ between values in the scalar sort are interpreted so that $g(r,s) = gcd(r,s)$ and the following axioms hold for all values $r, s$ in the scalar sort:

$$r = \gamma(r,s) \cdot g(r,s),$$
$$1 = \alpha(r,s) \cdot \gamma(r,s) + \beta(r,s) \cdot \gamma(s,r).$$

We will use the following fact, proved in \cite{vdDH}:

\begin{thm}
\label{th:vdDH}
Any formula $\varphi_\t(\overline{x})$ in $k$-parametric unordered Presburger arithmetic is logically equivalent to a quantifier-free $L_2$-formula $\psi(\overline{x}, \t)$: that is, with the natural interpretations of the symbols from $L_2$ given above, $$\models \forall \overline{x} \in \Z^d \,  \forall \t \in \Z^k \, \left( \varphi_\t(\overline{x}) \leftrightarrow \psi(\overline{x}, \t) \right),$$ where $\psi(\overline{x}, \t)$ is a Boolean combination of equations $s_1(\overline{x}, \t) = s_2(\overline{x}, \t)$ and divisibility relations $s_3(\t) | s_1(\overline{x}, \t)$, where $s_1(\overline{x}, \t)$, $s_2(\overline{x}, \t)$, and $s_3(\t)$ are $L_2$-terms, i.e. expressions built up using only the operations in $L_2$ and the displayed parameters and variables.
\end{thm}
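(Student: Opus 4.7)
The plan is to prove Theorem~\ref{th:vdDH} by standard quantifier elimination: by induction on the quantifier complexity of $\varphi_\t(\overline{x})$, and since $\forall y\,\chi$ is equivalent to $\neg \exists y\, \neg \chi$, it suffices to show that $\exists y\, \chi(y, \overline{x}, \t)$ is equivalent to a quantifier-free $L_2$-formula whenever $\chi$ is itself quantifier-free in $L_2$. The base of the induction is immediate because any atomic formula in the original $k$-parametric unordered Presburger language, namely an equation $\sum_i f_i(\t) x_i = g(\overline{x}, \t)$ with $f_i, g \in \Z[\t]$, is already an atomic $L_2$-formula, since the ring operations of the scalar sort generate all polynomials in $\Z[\t]$.

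For the single-quantifier step, put $\chi$ into disjunctive normal form and distribute $\exists y$ over the disjunction, reducing to the case where $\chi$ is a conjunction of literals. Those literals not involving $y$ commute with $\exists y$ and are pulled outside. Each remaining literal lives in the group sort and takes one of four shapes: $b(\t)\cdot y = t(\overline{x}, \t)$, its negation, $s(\t) \mid b(\t)\cdot y - t(\overline{x}, \t)$, or its negation. Because the parametric coefficient $b(\t)$ may vanish for some $\t$, one performs a scalar-sort case split on which $b_i(\t)$ are zero, which is legitimate because $b_i(\t) = 0$ is itself an atomic $L_2$-formula; this produces finitely many sub-cases, each carrying the extra scalar conjuncts $b_i(\t) = 0$ or $b_i(\t) \neq 0$. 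Within each sub-case, the $y$-literals whose coefficient is forced to zero no longer mention $y$ and pass outside the quantifier. If a surviving equation $b(\t)\cdot y = t(\overline{x},\t)$ has $b(\t) \neq 0$, then $\exists y$ is equivalent to $b(\t) \mid t(\overline{x},\t)$ together with the conjunction obtained by cross-multiplying every other literal by $b(\t)$ to eliminate $y$ (for instance, $b'(\t) y = t'(\overline{x},\t)$ becomes $b'(\t) t(\overline{x},\t) = b(\t) t'(\overline{x},\t)$, and $s(\t) \mid b'(\t) y - t'(\overline{x},\t)$ becomes $b(\t) s(\t) \mid b'(\t) t(\overline{x},\t) - b(\t) t'(\overline{x},\t)$).

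The main obstacle is the remaining subcase, in which only divisibilities (together with inequations and non-divisibilities) involve $y$ with nonzero coefficient. Here one must collapse a conjunction $\bigwedge_i\, s_i(\t) \mid b_i(\t) y - t_i(\overline{x}, \t)$ into a single equivalent divisibility $N(\t) \mid y - r(\overline{x}, \t)$. A single such divisibility is solvable in $y$ iff $g(b_i(\t), s_i(\t)) \mid t_i(\overline{x}, \t)$, and then its solution set is a coset of $\bigl(s_i(\t)/g(b_i(\t), s_i(\t))\bigr)\cdot \Z$ in $\Z$. The pairwise combination of two such cosets is a parametric Chinese-remainder step; the axioms $r = \gamma(r,s) g(r,s)$ and $1 = \alpha(r,s)\gamma(r,s) + \beta(r,s)\gamma(s,r)$ are engineered precisely so that the modulus, a representative, and the solvability condition of the combined coset remain expressible as $L_2$-terms. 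Iterating this pairwise combination collapses the entire conjunction of divisibilities to a single $N(\t)\mid y - r(\overline{x},\t)$. Finally, each inequation $y \neq u_j$ excludes at most one point of this coset, and each non-divisibility excludes at most one sub-coset, so the condition that the complement is nonempty is expressible via divisibility tests among the residues modulo $N(\t)$, which remains quantifier-free in $L_2$. The genuinely hard ingredient is the parametric B\'ezout/CRT step, which is exactly why the functions $g, \alpha, \beta, \gamma$ are included in $L_2$; without them, the combined modulus and representative could not be written as terms of the language.
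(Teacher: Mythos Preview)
The paper does not give its own proof of Theorem~\ref{th:vdDH}; it simply quotes the result from van den Dries and Holly \cite{vdDH}. So there is no in-paper argument to compare against, and your outline is an attempt to reconstruct the cited proof.

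Your template is the standard one, and the reduction to a single existential quantifier over a conjunction of literals, the case split on vanishing leading coefficients, the substitution step when an equation is present, and the parametric Chinese-remainder collapse of the positive divisibilities are all correct in spirit. You are also right that the functions $g,\alpha,\beta,\gamma$ are included in $L_2$ precisely so that the combined modulus and representative in the CRT step are expressible as terms.

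The gap is in your last sentence before the summary. After collapsing the positive divisibilities to a single condition $N(\t)\mid y-r(\overline x,\t)$, you still must eliminate $\exists y$ from a conjunction containing \emph{several} non-divisibilities $s_j(\t)\nmid b_j(\t)y-t_j(\overline x,\t)$, and you assert that ``the condition that the complement is nonempty is expressible via divisibility tests among the residues modulo $N(\t)$.'' This is exactly the delicate point, and it is not justified. In ordinary (non-parametric) Presburger arithmetic one dispatches a non-divisibility $s\nmid u$ by rewriting it as the finite disjunction $\bigvee_{i=1}^{s-1}\, s\mid u-i$ and distributing; that trick is unavailable here because $s$ is a scalar term in $\t$, so the length of the disjunction is not fixed. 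Concretely, whether a finite union of parametric sub-cosets covers the ambient coset $r+N\Z$ is a genuine combinatorial condition on the parameters (for instance, $\exists y\,(s\nmid y\ \wedge\ s\nmid y+1)$ is equivalent to $s\notin\{\pm 1,\pm 2\}$), and producing such a quantifier-free description uniformly for an arbitrary fixed number of non-divisibilities is where the real work of the vdDH argument lies. Your identification of the CRT step as ``the genuinely hard ingredient'' understates the difficulty of this subsequent step; until you explain how the covering condition is expressed as an $L_2$-formula, the elimination is incomplete.
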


\textit{Proof of Theorem~\ref{th:unordered}:} Say $\varphi_\t(\overline{x})$ defines a $k$-parametric unordered PA family in $\Z^d$.

Note that (1) follows almost immediately from quantifier elimination: by Theorem~\ref{th:vdDH}, the formula $\exists \overline{x} \varphi_t(\overline{x})$ is equivalent to a quantifier-free $L_2$-formula $\psi(\t)$ in only the scalar sort of $\t$, which is a Boolean combination of equations and divisibility relations $|$ in the $k$ parameters using ring operations and the functions $g, \alpha, \beta,$ and $\gamma$, but all of these operations are polynomial-time computable.

For (2), let us assume (by Theorem~\ref{th:vdDH}) that $\varphi_\t(\overline{x})$ is a quantifier-free $L_2$-formula, and that $\varphi_t(\overline{x})$ is in disjunctive normal form: $$\varphi_t(\overline{x}) = \bigvee_{i=1}^m \theta_i(\overline{x}, \t),$$ where each $\theta_i(\overline{x}, \t)$ is a conjunction of \emph{literals}.\footnote{A literal is an \emph{atomic} $L_2$-formula, i.e. one containing no logical operations $\wedge, \vee$ or $\neg$, or the negation of an atomic formula.}

\begin{Claim}
For any fixed value of $\t \in \Z^k$ and of $i \in \{ 1, \ldots, m\}$, if $S_i := \{\overline{x} \in \Z^d : \, \models \theta_i(\overline{x}, \t) \}$, then $|S_i|$ is either $0$, $1$, or $\infty$.
\end{Claim}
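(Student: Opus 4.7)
The plan is to analyze the structure of $S_i$ by translating each literal of $\theta_i(\overline{x}, \t)$, with $\t$ fixed, into a geometric condition on $\overline{x} \in \Z^d$, and then perform a short case analysis based on the ``size'' of the affine sublattice cut out by the positive literals.

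First I will observe that, with $\t$ fixed, every $L_2$-term $s(\overline{x},\t)$ appearing in $\theta_i$ evaluates to a $\Z$-linear form in $\overline{x}$: indeed, the auxiliary operations $g,\alpha,\beta,\gamma$ act on the ring sort alone and so reduce to integer constants once $\t$ is specified, while the only operation mixing the two sorts is scalar multiplication. Consequently, each atomic literal in $\theta_i$ becomes either (a) an integer linear equation $c \cdot \overline{x} = b$ with $c \in \Z^d$, $b \in \Z$, or (b) a divisibility condition $a \mid (c \cdot \overline{x} - b)$, which is either empty or a single coset of the finite-index subgroup $\{\overline{x} \in \Z^d : c \cdot \overline{x} \equiv 0 \pmod{a}\}$. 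In both cases, the solution set in $\Z^d$ is an affine sublattice (a coset of a subgroup of $\Z^d$), possibly empty.

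Next, let $L \subseteq \Z^d$ be the intersection of the solution sets of the \emph{positive} literals in $\theta_i$; then $L$ is itself either empty or an affine sublattice. For each \emph{negated} literal, the ``forbidden'' set is an affine sublattice $A_j$ of $\Z^d$. Therefore
\[
S_i \;=\; L \;\setminus\; \bigcup_{j=1}^{r} A_j.
\]
If $L = \varnothing$ then $|S_i| = 0$; if $L$ is a single point then $|S_i| \in \{0,1\}$; so the only remaining case is when $L$ is infinite, i.e., the underlying subgroup of $L$ has rank $\geq 1$.

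The main obstacle is the last case, and my approach is to invoke (a quantitative form of) B.~H.~Neumann's theorem on coverings of a group by cosets of subgroups. Suppose for contradiction that $S_i = L \setminus \bigcup_j A_j$ is finite and nonempty, say $S_i = F$ with $0 < |F| < \infty$. Then, writing each $A_j$ that meets $L$ as $A_j \cap L$, we have
\[
L \;=\; F \,\cup\, \bigcup_{j=1}^{r} (A_j \cap L),
\]
and viewing the finite set $F$ as $|F|$ cosets of the trivial subgroup $\{0\} \leq L$, we have expressed the infinite group $L$ (after translating to make it a subgroup) as a finite union of cosets of subgroups. By Neumann's theorem one may discard all cosets of subgroups of infinite index while still covering $L$; since $L$ is infinite, $\{0\}$ has infinite index in $L$, so the $|F|$ cosets witnessing $F$ are redundant, giving $L = \bigcup_j (A_j \cap L)$ and hence $F \subseteq L \setminus \bigcup_j A_j = \varnothing$, a contradiction. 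Thus in the infinite case $|S_i| \in \{0,\infty\}$, completing the trichotomy. (Alternatively one can avoid Neumann's theorem by reducing modulo a common finite-index subgroup $M$ of $L$ containing the direction subgroups of all full-rank $A_j\cap L$, and arguing in $L/M$ together with a one-dimensional sweep in any primitive direction avoiding the lower-rank $A_j \cap L$.)
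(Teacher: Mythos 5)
Your proof is correct, and it takes a genuinely different route from the paper's. The paper's argument is constructive: it assumes two distinct points $\overline{x}_1, \overline{x}_2 \in S_i$, forms the line $L = \{\overline{x}_1 + j(\overline{x}_2 - \overline{x}_1) : j \in \Z\}$, and shows directly that infinitely many points on this line lie in $S_i$. Concretely, it splits $\theta_i$ into divisibility literals (type A) and equation literals (type B): the positive equations cut out an affine subspace containing the whole line; the divisibility literals define a Boolean combination of cosets of a single finite-index subgroup $H$, so the set of $j$ satisfying them is periodic and infinite; and each negated equation defines an affine subspace that meets $L$ in at most one point (since $\overline{x}_1 \notin A_j$). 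Your proof instead observes that, once $\t$ is fixed, the solution set of every positive literal is a coset of a subgroup of $\Z^d$, hence their intersection $L$ is empty, a singleton, or an infinite coset; and then, in the infinite case, it packages the finitely many ``bad'' points together with the forbidden cosets $A_j \cap L$ into a finite covering of the group $L$ by cosets, and appeals to B.\ H.\ Neumann's lemma (cosets of infinite-index subgroups may be discarded from any finite coset covering) to conclude the finite exceptional set is redundant and hence empty. Both arguments are sound; yours is shorter and more conceptual at the cost of importing a nontrivial external lemma, while the paper's stays elementary and exhibits the infinite family of solutions explicitly, which is arguably more in keeping with the constructive spirit of the rest of Section~4. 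Your parenthetical alternative (reduce modulo a common finite-index subgroup and sweep along a primitive direction) is in fact essentially the paper's argument, so you independently rediscovered that route as well.
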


\begin{proof}
By rearranging terms, we may assume that all atomic $L_2$-formulas in $\theta_i(\overline{x}, \t)$ have the form
\begin{equation}\tag{A}
r \, | \, s(\overline{x}, \t)
\end{equation}
or
\begin{equation}\tag{B}
s(\overline{x}, \t) = 0,
\end{equation}
where $s(\overline{x}, \t) = r_0 + \sum_{i=1}^d r_i \cdot x_i$ and $r_0, r_1, \ldots, r_n,$ and $r$ are terms in the scalar sort. The terms $r$ and $r_i$ may involve the parameters $\t$ and the operations $g, \alpha, \beta, \gamma$, but the details of this are irrelevant since $\t$ has a fixed value.

Write $$\theta_i(\overline{x}, \t) = \theta_A(\overline{x}, t) \wedge \theta_B(\overline{x}, \t)$$ where $\theta_A(\overline{x}, \t) $ is the conjunctions of all literals of type (A) and $\theta_B(\overline{x}, t) $ is the conjunction of all literals of type (B).

First we consider the atomic formulas of type (A). Each one defines some coset of a finite-index subgroup of $\Z^d$, and so the negation of such a formula defines a finite union of cosets of finite-index subgroups. Since the intersection of finitely many finite-index subgroups is of finite index, there is a single subgroup $H \leq \Z^d$ such that $[\Z^d : H ] < \infty$ and $\theta_A(\overline{x}, \t)$ defines a Boolean combination of cosets of $H$.

Now consider the atomic formulas of type (B). We decompose $\theta_B(\overline{x}, \t)$ further as $$\theta_B(\overline{x}, \t) = \theta^+_B(\overline{x}, \t) \wedge \theta^-_B(\overline{x}, \t)$$ where $\theta^+_B(\overline{x}, \t)$ is the conjunction of all positive (non-negated) atomic formulas of type (B) and $\theta^-_B(\overline{x}, \t)$ is the conjunction of all negative literals of type (B). Note that the set of solutions to $\theta^+_B(\overline{x}, \t)$ is of the form $(\vec{v} + S) \cap \Z^d$ where $S$ is a vector subspace of $\R^d$ and $\vec{v} \in \Z^d$.

Finally, suppose that there are at least two distinct elements $\overline{x}_1, \overline{x}_2 \in \Z^d$ in $S_i$, and to finish the proof of the Claim we will show that $S_i$ has infinitely many elements. In particular, both $\overline{x}_1$ and $\overline{x}_2$ are solutions to $\theta_A(\overline{x}, \t)$, so there are cosets $C_1, C_2$ of $H$ such that $\overline{x}_1 \in C_1$, $\overline{x}_2 \in C_2$, and any element $\overline{x} \in C_1 \cup C_2$ satisfies $\theta_A(\overline{x}, \t)$. Let $L \subseteq \R^d$ be the line passing through $\overline{x}_1$ and $\overline{x}_2$, and observe that since $\overline{x}_1$ and $ \overline{x}_2$ satisfy  $\theta^+_B(\overline{x}, \t)$ (which defines the intersection of an affine subspace with $\Z^d$), any other element of $L \cap \Z^d$ will also satisfy $\theta^+_B(\overline{x}, \t)$.

For any $j \in \Z$, let $\overline{x}(j) :=  \overline{x}_1 + j \cdot (\overline{x}_2 - \overline{x}_1)$ and $$X := \{j \in \Z : \overline{x}(j) \textup{ satisfies } \theta_i(\overline{x}, \t) \}.$$ Since $H$ is a finite-index subgroup of $\Z^d$, adding successive copies of the element $(\overline{x}_2 - \overline{x}_1)$ to $\overline{x}_1$ causes the $\overline{x}(j)$ to cycle through cosets of $H$, and the set of $j$ for which $\theta_A(\overline{x}(j), \t)$ is true is infinite (and periodic). As observed in the previous paragraph, \emph{every} $\overline{x}(j)$ lies on the line $L$, and hence $\theta^+_B(\overline{x}(j), \t)$ is always true, and we need only worry about the truth of $\theta^-_B(\overline{x}(j), \t)$. Now  $\theta^-_B(\overline{x}(j), \t)$ is true whenever $\overline{x}(j)$ \emph{avoids} every one of a finite number of affine subspaces $A_1, \ldots, A_\ell$ of $\R^d$, but given that $L$ is a line which contains some points satisfying the formula $\theta^-_B(\overline{x}, \t)$, each $A_i$ can only intersect $L$ in at most one point. Therefore $X$ is infinite, as we wanted.

\end{proof}

The Claim shows that we can define the set of values of the parameter $\t$ for which any given $\theta_i(\overline{x}, \t)$ has infinitely many solutions (for $\overline{x}$) by the formula $$\exists \overline{x}_1 \exists \overline{x}_2 \left(\overline{x}_1 \neq \overline{x}_2 \wedge \theta_i(\overline{x}_1, \t) \wedge \theta_i(\overline{x}_2, \t) \right),$$ and as before this is equivalent to a quantifier-free $L_2$-formula $\psi_i(\t)$ whose truth can be decided by a polynomial-time algorithm in $\t$. Finally, our original formula $\bigvee_{i=1}^m \theta_i(\overline{x}, \t)$ has infinitely many solutions just in case any one of the formulas $\theta_i(\overline{x}, \t)$ does, establishing (2).

By the argument above, for any $k$-parametric unordered PA family $S_\t$, there is a finite partition $\Z^k = X_1 \cup \ldots \cup X_\ell$ which is definable by quantifier-free $L_2$-formulas in $\t$ and such that $|S_\t|$ is constant as $\t$ varies over any of the sets $X_i$. Since deciding whether $\t \in X_i$ is polynomial-time decidable, this establishes (3). $\square$ (Theorem~\ref{th:unordered})

\bigskip

\section{Summary of Complexity Results}

To conclude, we summarize the complexity results which suggest that Theorem~\ref{thm:2PPA} may be the best we could hope for: weakening or changing various assumptions results in problems which can be resolved in polynomial time, or else (with unrestricted multiplication) have no algorithmic solutions at all.

Recall that Theorem~\ref{thm:2PPA} states that, if $\poly \neq \NP$, then there is a $\Sigma_2$ PA family $S_\t$ with two parameters $\t = (t_1, t_2)$ such that $|S_\t|$ cannot be computed in polynomial time given $\t$ as input.

However:
\medskip

$\bullet$ If we allow only a single parameter $t \in \N$ (or $\t \in \Z$), then for any PA family $S_t$, we can compute $|S_t|$ in polynomial time, even if $S_t$ has complexity $\Sigma_2$ or higher, by Corollary \ref{cor:1_param}.

\medskip

$\bullet$ If $S_\t$ is a $k$-parametric PA family defined by a formula of complexity $\Pi_1$ or $\Sigma_1$, then \cite{BW03} implies that there is a polynomial time algorithm to evaluate $|S_\t|$, for any finite number $k$ of parameters. If $S_\t$ is defined by a quantifier-free formula, then a polynomial-time algorithm was earlier given in \cite{barvinok94}.

\medskip

$\bullet$ If $S_\t$ is any $k$-parametric PA family defined by a formula with no inequalities (only equations), as in Section 4, then $|S_\t|$ can be evaluated in polynomial time, regardless of the number of quantifier alternations in the defining formula or the number of parameters.

\medskip
 
$\bullet$ In $k$-parametric PA formulas, we allow a restricted version of multiplication: the non-quantified parameters in $\t$ can be multiplied by terms containing the variables $\x$ and $\y$, but no multiplication between the $\x$ and $\y$ variables is allowed. Permitting unrestricted multiplication amongst the $\x$ and $\y$ variables in a parametric PA formula would obviously be bad, since the full first-order theory of $(\N, +, \cdot)$ is undecidable (by theorems of Church and Turing -- see, e.g., \cite{Church}). In fact, the Matiyasevich-Robinson-Davis-Putnam theorem \cite{Davis73} states that there is a \emph{single} multivariate polynomial $p(t, x_1, \ldots, x_d)$ such that if $\Phi_t(x_1, \ldots, x_d)$ is the formula expressing $$p(t, x_1, \ldots, x_d) = 0,$$ then the set of $t \in \N$ for which $ \Phi_t(x_1, \ldots, x_d)$ defines a nonempty subset of $\Z^d$ is not computable (much less in polynomial time). Note that here we have only a single parameter $t$, no quantifiers in the formula $\Phi_t$, and mere equations rather than inequalities.

\medskip

$\bullet$ On the other hand, if we allow \emph{no multiplication}, even by parameters (cf. Example \ref{ex:c}), then $\abs{S_{\t}}$ will be computable in polynomial time; in fact, it has a nice form as a piecewise-defined quasi-polynomial \cite{Woods15}.

\subsection*{Acknowledgements}
We thank Igor Pak for interesting conversations and helpful remarks.
This work was started when the first and third authors were participating in the MSRI program \emph{Geometric and Topological Combinatorics}; we thank MSRI for their hospitality.  The third author
was partially supported by the UCLA Dissertation Year Fellowship. The first author would also like to thank San Francisco State University and the second author would like to thank the City University of New York for hosting them as visiting researchers.

\bibliographystyle{plain}  
\bibliography{multiparametric_hardness}

\end{document}